\documentclass[12pt]{article}
\usepackage{amssymb}
\usepackage{amsmath}
\usepackage{amsthm}
\usepackage{mathtools}
\usepackage[colorlinks=true,allcolors=black]{hyperref}
\usepackage{url}
\usepackage{setspace}
\usepackage{float}
\usepackage{tikz}
\tikzset{every picture/.style={line width=0.75pt}}
\usepackage[labelfont=bf]{caption}

\usepackage{mathtools}
\usepackage{enumerate}
\usepackage{mathrsfs}

\usepackage{setspace}
\usepackage{tikz}
\tikzset{every picture/.style={line width=0.75pt}}
\usepackage[labelfont=bf]{caption}

\newtheorem{theorem}{Theorem}
\newtheorem{proposition}{Proposition}
\newtheorem{definition}{Definition}
\newtheorem{remark}{Remark}
\newtheorem{lemma}{Lemma}

\newcommand{\mres}{\mathbin{\vrule height 1.6ex depth 0pt width
0.13ex\vrule height 0.13ex depth 0pt width 1.3ex}}

\NewDocumentCommand{\sff}{}{\mathrm{I\!I}}

\newcommand{\Addresses}{{
  \bigskip
  \footnotesize

KMS:
\textsc{Department of Mathematics, Johns Hopkins University, 404 Krieger Hall, 3400 North Charles Street, Baltimore, MD 21218, USA}\par\nopagebreak
  \textit{Email address:} \texttt{kmarsh34@jh.edu}

\medskip

GN:
\textsc{Department of Mathematics, University of Rochester, 915 Hylan Building, Rochester, NY 14620, USA}\par\nopagebreak
  \textit{Email address:} \texttt{gniu3@ur.rochester.edu}

}}
\begin{document}
\title{\large \textbf{ISOPERIMETRY OF SINGULAR CMC HYPERSURFACES}}
\author{\small KOBE MARSHALL-STEVENS \& GONGPING NIU}
\date{\vspace{-4ex}}
\maketitle
\begin{abstract}
    \noindent We produce the first examples of singular isoperimetric boundaries with non-zero constant mean curvature in Riemannian manifolds.~The main technical ingredient is establishing that, for constant mean curvature hypersurfaces containing isolated singularities modelled on strictly stable, strictly minimising regular tangent cones, weak strict stability implies local unique volume-constrained minimisation.
    \end{abstract} 
\tableofcontents

\section{Introduction}

Isoperimetric boundaries minimise area for a fixed enclosed volume, with regularity theory ensuring smoothness away from a closed singular set of codimension seven. In \cite{MN26} we showed that this result is sharp:~in a given closed manifold, one can construct a variety of singular isoperimetric boundaries saturating this dimension bound on the singular set. Since isoperimetric boundaries can a priori possess any constant mean curvature (CMC), one might expect that `most' isoperimetric boundaries should not be minimal. However, the constructions of \cite{MN26} rested on a transfer of local area-minimisation to global isoperimetry, and hence the resulting singular isoperimetric boundaries all arose as minimal hypersurfaces. Motivated by this, we establish here the following result:

\begin{theorem}\label{thm: CMC singular isop}
    On a closed manifold of dimension at least eight, there exist singular isoperimetric boundaries with non-zero constant mean curvature with respect to some Riemannian metric.
\end{theorem}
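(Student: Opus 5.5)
The plan is to follow the strategy indicated in the abstract. First I will build a singular CMC hypersurface that is weakly strictly stable. Then I will use the main technical result (weak strict stability implies local unique volume-constrained minimisation near isolated singularities modelled on strictly stable, strictly minimising regular cones) to make it a strict local isoperimetric minimiser. Finally, I will change the metric away from the hypersurface so that local minimisation becomes global isoperimetry, in the spirit of the transfer argument of \cite{MN26}.

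\textbf{Step 1 (local model).} Fix $n+1\ge 8$ and a regular cone $\mathbf{C}\subset\mathbb{R}^{n+1}$ that is strictly stable and strictly area-minimising, for instance a Simons cone $C^{p,q}$ with $p+q\ge 7$ in the range where Hardt--Simon strict minimality holds. Near the vertex I will take a hypersurface $\Sigma_0$ which agrees asymptotically with $\mathbf{C}$ and has small nonzero constant mean curvature $H$. Two routes seem available. One is to perturb the cone by solving the CMC equation $H_{\Sigma}=H$ as a graph over $\mathbf{C}$ in weighted spaces with decay rate between the indicial roots, using strict stability to get invertibility. The other is to build the metric around a chosen hypersurface: take $\Sigma$ equal to $\mathbf{C}$ near $0$ and smoothly capped off to a closed hypersurface in a torus or sphere, and conformally deform the metric away from a neighbourhood of the vertex so that the mean curvature becomes exactly $H$. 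I will try the second route first because it sidesteps the analysis. I expect that prescribing constant mean curvature by a metric change near a smooth region of $\Sigma$, while keeping the metric Euclidean (or nearly so) near the singularity, is a routine local computation. Near the vertex, the mean curvature can be made equal to $H$ by a conformal factor $e^{2f}$ with $\partial_\nu f$ prescribed. Along the cone this is consistent with $f$ being smooth and homogeneous-degree-one compatible, so I will choose $f$ linear in the normal distance, with $f=0$ on $\Sigma$.

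\textbf{Step 2 (weak strict stability).} I need the Jacobi operator $L=\Delta_\Sigma+|A|^2+\mathrm{Ric}(\nu,\nu)$ to be positive definite on mean-zero test functions. Because the metric is free away from the vertex, I will add a large positive multiple of a function with $\mathrm{Ric}(\nu,\nu)$ very negative along $\Sigma$. This can be done via a conformal factor whose second normal derivative is large, which leaves the induced metric and $H$ unchanged to first order. Near the vertex, strict stability of $\mathbf{C}$ (Hardy-type inequality) controls the $|A|^2$ term. Combining the two by a cutoff argument should give $Q(\phi)\ge c\int|\nabla\phi|^2+|\phi|^2/r^2$, which is stronger than weak strict stability. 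The main technical result stated earlier then yields a neighbourhood $U$ of $\Sigma$ and $\varepsilon>0$ with the following property: among Caccioppoli sets $E$ with $\partial E\subset U$ and the same volume as the region $\Omega$ bounded by $\Sigma$, the set $\Omega$ uniquely minimises perimeter.

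\textbf{Step 3 (local to global).} This is the main obstacle. I will shrink the metric conformally by a tiny factor on a region containing $\Omega$ and blow it up outside $U$, in the manner of \cite{MN26}. The aim is that any isoperimetric region of volume $|\Omega|$ is forced, by area comparison and a compactness argument, to have boundary inside $U$. The difficulty compared with the minimal case is that the volume constraint couples the inside and outside, and $H\neq0$ means $\Sigma$ is not calibrated. I will handle this with a sequence of metrics $g_k$ that are fixed near $\overline{U}$ and degenerate elsewhere. Suppose isoperimetric sets $E_k$ failed to lie in $U$. Taking limits, using monotonicity and the lower density bounds for almost-minimisers, would produce a competitor of volume $|\Omega|$ with strictly smaller perimeter, or else one converging to $\Omega$ with boundary outside $U$. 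Both contradict Step 2: the first by unique minimisation, the second by Allard/Hardt--Simon type convergence, which places $\partial E_k$ graphically in $U$. The resulting isoperimetric boundary is $\Sigma$, which is singular and has $H\neq0$.
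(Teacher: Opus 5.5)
Your Step 3 has a genuine gap, and it is where the paper does its global work. Theorem~\ref{thm: F_H strict stability to VCM} only gives minimality against sets $E$ with $E\Delta\Omega$ inside a tubular neighbourhood. So the first alternative in your contradiction argument (a same-volume competitor of smaller perimeter whose boundary leaves $U$) contradicts nothing, and you give no mechanism that rules such competitors out.

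In \cite{MN26} that mechanism is homological. The stretched metric forces every isoperimetric region of the relevant volume to have a boundary component homologous to the hypersurface in a tubular neighbourhood. One then compares with a hypersurface that is uniquely \emph{homologically area-minimising} there. For fixed $H\neq0$ this comparison is unavailable: $\Sigma$ is not homologically area-minimising in any neighbourhood, since a normal push by $\psi$ compactly supported in $\mathrm{Reg}(\Sigma)$ with $H\int_\Sigma\psi\,dA_g<0$ lowers area. So a degeneration argument at fixed $H$ does not close, and your argument never uses that $H$ is small.

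The paper instead argues perturbatively, in four steps.
\begin{enumerate}
\item It takes a strictly stable minimal $\Sigma_0=\partial\Omega_0$, which is uniquely homologically area-minimising in a tubular neighbourhood because the cones are strictly minimising. It makes $\Sigma_0$ an isoperimetric boundary for a metric $h_0$ via \cite[Corollary 1]{MN26}.
\item For $H_j\to0$ it builds strictly stable $H_j$-CMC hypersurfaces $\Sigma_j=\partial\Omega_j\to\Sigma_0$ for metrics $h_j\to h_0$, where $h_j$ differs from $h_0$ only near the singular points.
\item A compactness argument shows that the neighbourhoods of unique $\mathcal{F}_{H_j}$-minimisation all contain a fixed $V$. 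This uses uniform stability constants, identical tangent cones, and $j$-independent constants in Lemma~\ref{lem: dichotomy adapted} and Remark~\ref{rem: Taylor expansion}.
\item Any $h_j$-isoperimetric region $E_j$ of volume $\mathrm{Vol}_{h_j}(\Omega_j)$ has a boundary component homologous to $\Sigma_0$. Together with $\mathrm{Per}_{h_j}(\Omega_j)\to\mathrm{Per}_{h_0}(\Omega_0)$ and a uniform lower perimeter bound per component, this forces a single boundary component lying in $V$. Unique volume-constrained minimisation in $V$ then gives $E_j=\Omega_j$.
\end{enumerate}

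Separately, your preferred route (b) in Step 1 fails at the vertex. Making the exact cone $H$-CMC by a factor $e^{2f}$ with $f=0$ on $\mathbf{C}$ requires $\partial_\nu f\equiv c\neq0$ along $\mathbf{C}$. Hence $\nabla f=c\,\nu$ on $\mathbf{C}\setminus\{0\}$. Since $\nu$ is constant along rays but not constant on a non-planar cone, $\nabla f$ cannot be continuous at $0$. Equivalently, a degree-one homogeneous function differentiable at $0$ is linear, and no nonzero linear function vanishes on a non-planar cone. You would obtain only a Lipschitz metric at the singular point. That is not admissible, and it breaks the analysis behind Theorem~\ref{thm: F_H strict stability to VCM} (Lemma~\ref{lem: dichotomy adapted} needs $\|g-g_{\mathrm{eucl}}\|_{C^4}\le\delta$).

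Near the vertex you need route (a), i.e.\ the graphs of Theorem~\ref{thm: CMC CHS} from \cite{CHS84,I17}. Conformal changes should be used only away from the singular point, as in Proposition~\ref{prop: construction in prescribed ambient}. The paper builds these CMC hypersurfaces as perturbations of the minimal $\Sigma_0$ precisely because the global step needs $\Sigma_j\to\Sigma_0$. With that repair, your Step 2 is essentially Proposition~\ref{prop: construction in prescribed ambient} combined with Theorem~\ref{thm: F_H strict stability to VCM}.
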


This shows the effectiveness of the results of \cite{MNP25}; there it was shown that for a generic choice of Riemannian metric and enclosed volume pair (or for fixed enclosed volume and generic Riemannian metric) on a closed eight manifold, every isoperimetric region with respect to this pair (or metric) has smooth boundary. While the generic regularity results of \cite{LW25} ensure that all minimal isoperimetric boundaries (e.g.~as produced in both \cite{MN26} and \cite{N24}) are smooth in a closed eight manifold equipped with a generic Riemannian metric, this does not account for isolated singularities arising in isoperimetric boundaries with non-zero CMC (e.g.~as produced by Theorem~\ref{thm: CMC singular isop}). Indeed, the isolated singularities in these hypersurfaces are only guaranteed to be perturbed away by the results of \cite{MNP25}, which account for any possible CMC value.

\bigskip

In \cite[Corollary 2 \& Remark 4]{MN26} we showed that any smooth, connected, null-cobordant manifold could arise as an isoperimetric boundary with non-zero CMC value. This rested on the transfer of strict stability to volume-constrained minimisation available for smooth hypersurfaces by the results of \cite{W94} and \cite{G96}; this is not known to hold generally in the presence of a singular set. Here, we establish this transfer for certain isolated singularities, generalising these results. This is the main technical result on which the constructions in Theorem~\ref{thm: CMC singular isop} are based:

\begin{theorem}\label{thm: strict stability to VCM}
    If $\Sigma = \partial \Omega$ is a strictly stable $H$-CMC hypersurface with isolated singularities modelled on strictly stable, strictly minimising regular tangent cones, then $\Omega$ uniquely minimises the natural area-type functional $\mathcal{F}_H$, and is consequently uniquely volume-constrained minimising, in a tubular neighbourhood of $\Sigma$. The conclusion of unique volume-constrained minimisation in a tubular neighbourhood also holds under the assumption that the CMC hypersurface is weakly strictly stable. 
\end{theorem}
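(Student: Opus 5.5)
The plan follows the smooth-case strategy of \cite{W94} and \cite{G96}, but organises it as a calibration/comparison argument that can be localised away from the singularities. Write $\mathcal{F}_H(E) = \mathrm{Per}(E) + H\,\mathrm{Vol}(E)$, with the sign convention making $\Sigma=\partial\Omega$ critical. Near each singular point $p$ the hypersurface is a $C^{1,\alpha}$ graph over its tangent cone $C_p$, decaying at a rate governed by the first eigenvalue of the Jacobi operator on the link. The tangent cone is assumed strictly stable and strictly minimising in the sense of Hardt--Simon. I will then use the Hardt--Simon foliation of each side of $C_p$ by smooth minimising hypersurfaces. After perturbing it to account for the ambient metric and the mean curvature $H$, which is lower order at small scales, it gives a local foliation of a punctured neighbourhood of $p$ by hypersurfaces of prescribed mean curvature close to $H$. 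Away from the singular set, strict stability lets me solve the CMC-type equation $H_{\Sigma_t} = H + t\,\phi$ on normal graphs over $\Sigma$. Here $\phi$ is a positive first eigenfunction, or a suitable positive supersolution, of the Jacobi operator. This produces a foliation $\{\Sigma_t\}_{|t|<\varepsilon}$ of a tubular neighbourhood $U$ with mean curvature strictly greater than $H$ on one side and strictly less on the other.

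\textbf{The main obstacle} is constructing this global foliation through the singularities. The Jacobi operator $L_\Sigma$ has an inverse-square potential $|A|^2 \sim r^{-2}$ near $p$. Strict stability, together with strict stability of $C_p$, should give invertibility of $L_\Sigma$ on weighted H\"older spaces $C^{2,\alpha}_{\gamma}$. The weight $\gamma$ is chosen between the indicial roots $\gamma^\pm$ of $C_p$, so that the positive eigenfunction behaves like $r^{\gamma^+}$, matching the slow-decaying Hardt--Simon leaves. I will do the implicit function argument in these weighted spaces and glue to the Hardt--Simon leaves on small annuli. Strict minimality (the $r^{\gamma^+}$ rather than $r^{\gamma^-}$ behaviour) is what lets the leaves be chosen disjoint and sweep out a full neighbourhood of $p$. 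The outcome is a unit normal field $\nu$ on $U\setminus\Sigma$, extending across $\Sigma$ away from the singular points, with $\mathrm{div}\,\nu \ge H$ on one side of $\Sigma$ and $\le H$ on the other, with equality only on $\Sigma$.

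With the foliation in hand, I run the calibration argument. For a competitor $E$ with $E\,\Delta\,\Omega \Subset U$, apply the divergence theorem to $\nu$ on $E\setminus\Omega$ and on $\Omega\setminus E$; this is justified because the singular set has zero capacity in codimension $\ge 7$ and $\nu$ is bounded. It gives
\[
\mathrm{Per}(E) - \mathrm{Per}(\Omega) \ge \int_{E\Delta\Omega} \pm(\mathrm{div}\,\nu - H)\,d\mathcal{H}^{n+1} - H\bigl(\mathrm{Vol}(E)-\mathrm{Vol}(\Omega)\bigr),
\]
so $\mathcal{F}_H(E)\ge\mathcal{F}_H(\Omega)$. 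Equality forces $|E\Delta\Omega|=0$, which gives uniqueness. Volume-constrained minimisation follows at once, since at equal volume the $H\,\mathrm{Vol}$ terms cancel.

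For the weakly strictly stable case, stability holds only for mean-zero variations. I will follow Grosse-Brauckmann and use the functional $\mathcal{F}_H + \lambda(\mathrm{Vol}-\mathrm{Vol}(\Omega))^2$ for large $\lambda$, or equivalently foliate by the constrained CMC family $\Sigma_t$ solving $H_{\Sigma_t} = H + c(t)$ with $\int u_t = t$. Weak strict stability makes $L_\Sigma$ invertible on the mean-zero weighted space, so the same weighted-space implicit function argument applies. These leaves have constant, non-equal mean curvatures $H + c(t)$ with $c$ monotone. Since the $c(t)$ are constants, equal-volume competitors are still calibrated, which yields unique volume-constrained minimisation but not $\mathcal{F}_H$-minimisation.
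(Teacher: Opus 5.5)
\paragraph{Strictly stable case.} Your calibration-by-foliation route is genuinely different from the paper's, which minimises $\mathcal{F}_H$ between barriers and then runs a second-variation estimate. The calibration step itself is fine. The gap is that the whole difficulty sits in the foliation through $\mathrm{Sing}(\Sigma)$, and you only assert it.

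The calibration needs $\mathrm{div}\,\nu\ge H$ on the entire outer side and $\le H$ on the inner side. This must hold near each $p$ and across the annuli where inner and outer leaves are joined. Leaves whose mean curvature is merely \emph{close to} $H$ do not give this. The perturbations actually available are the smooth one-sided $H$-CMC hypersurfaces of \cite[Theorem 8.7]{L23} used in the paper. These have mean curvature exactly $H$, exist only along a sequence $t_j\to 0$, and are not shown to be ordered or to sweep out a punctured neighbourhood of $p$.

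The interpolation errors in the gluing annuli are of order $\rho^{-2}$ times the height mismatch between the two leaves, and they carry no sign. Your forcing near $p$ is only $t\lambda_1\phi\sim t\rho^{\gamma_1^+}$, i.e.\ $\lambda_1\rho^2$ times the leaf height. Nothing in the sketch ensures the errors are dominated. A stronger supersolution, with forcing $\sim\rho^{\gamma-2}$ for some $\gamma<\gamma_1^+$, changes the blow-up rate to $\rho^{\gamma}$. That no longer matches the $r^{\gamma_1^+}$ asymptotics of the Hardt--Simon leaves, so new inner leaves would have to be built.

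The paper never needs sign control near the singularities. The $\Gamma_j^\pm$ serve only as obstacles, and $\mathcal{F}_H$ is minimised between them. Lemma~\ref{lem: dichotomy adapted}, where strict minimality enters via \cite[Theorem 4.5(1)]{W20}, gives $s_j\le a_j^{1/(1-\gamma)}$ with $n/(1-\gamma)>2$. Hence the non-graphical region costs only $O(s_j^n)=o(\|\widetilde v_j\|^2_{L^2(\Sigma)})$.

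\paragraph{Weakly strictly stable case.} This part fails outright, because the constrained CMC family has mean curvature monotone in the wrong direction. Differentiating at $t=0$ gives $-L_\Sigma\dot u=\dot c$ with $\int_\Sigma\dot u\,dA_g=1$. Hence $Q_L(\dot u,\dot u)=\dot c$, and $Q_L(\dot u,w)=\dot c\int_\Sigma w\,dA_g=0$ for every mean-zero $w$. It follows that $Q_L(a\dot u+w,a\dot u+w)=a^2\dot c+Q_L(w,w)$.

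So if $\dot c>0$, meaning mean curvature increases outward as your calibration requires, then $\Sigma$ is already strictly stable. In the genuinely new case $\dot c\le 0$, and the lower bound $\int_{E\setminus\Omega}c-\int_{\Omega\setminus E}c$ has the wrong sign. The concentric foliation around a round sphere is the typical picture.

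No other choice of leaves helps. Test the inequality on equal-volume competitors localised near one outer point and one inner point. This shows that a calibration of this kind forces $\mathrm{div}\,\nu\ge H$ outside and $\le H$ inside. That would give $\mathcal{F}_H$-minimality, which is false as soon as $Q_L$ has a negative direction.

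The penalty in $\mathcal{E}_C$ vanishes on equal-volume competitors, so it cannot rescue the calibration either; the two formulations you call equivalent are not. The paper instead minimises $\mathcal{E}_C$ in $S_j$ and uses the coercivity of $Q_L(u,u)+C(\int_\Sigma u\,dA_g)^2$. It then controls the extra mean-curvature defect $d_j=C|\mathrm{Vol}_g(\Omega_j)-\mathrm{Vol}_g(\Omega)|$ inside Lemma~\ref{lem: dichotomy adapted} by proving $d_j\le C_1a_j$.
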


For precise definitions of the natural area-type functional $\mathcal{F}_H$, strict stability of CMC hypersurfaces in the presence of isolated singularities, and the volume-constrained minimisation referred to above, see Subsection~\ref{subsec: notation}. As a consequence of Theorem~\ref{thm: strict stability to VCM}, the isoperimetric boundaries constructed in Theorem~\ref{thm: CMC singular isop} are unique for their enclosed volume; similar uniqueness holds for the examples produced in \cite{MN26} and \cite{N24} (see also the generic uniqueness results of \cite{N26}). Finally, we note that the minimal (i.e.~zero CMC value) case of Theorem~\ref{thm: strict stability to VCM} was established in \cite{S99}.

\subsection{Strategy}

In order to establish Theorem~\ref{thm: CMC singular isop} from Theorem~\ref{thm: strict stability to VCM} we utilise an adaptation of the `stretching' procedure introduced in \cite[Theorem 1]{MN26}. This reduces the problem to exhibiting connected, separating CMC hypersurfaces $\Sigma = \partial \Omega$ containing isolated singularities, such that $\Omega$ is volume-constrained minimising in a tubular neighbourhood of $\Sigma$.

\begin{figure}[H]
    \centering
\captionsetup{justification=justified,margin=1cm}
\begin{tikzpicture}[scale=-0.02, xscale=-1]

\draw  [draw opacity=0][dash pattern={on 0.84pt off 2.51pt}] (279.68,207.31) .. controls (288.78,186.53) and (309.53,172.02) .. (333.67,172.02) .. controls (357.89,172.02) and (378.7,186.64) .. (387.76,207.54) -- (333.67,230.93) -- cycle ; \draw  [dash pattern={on 0.84pt off 2.51pt}] (279.68,207.31) .. controls (288.78,186.53) and (309.53,172.02) .. (333.67,172.02) .. controls (357.89,172.02) and (378.7,186.64) .. (387.76,207.54) ;  
\draw  [draw opacity=0][dash pattern={on 0.84pt off 2.51pt}] (232.76,203.2) .. controls (244.91,158.87) and (285.49,126.31) .. (333.67,126.31) .. controls (381.81,126.31) and (422.36,158.82) .. (434.55,203.09) -- (333.67,230.93) -- cycle ; \draw  [dash pattern={on 0.84pt off 2.51pt}] (232.76,203.2) .. controls (244.91,158.87) and (285.49,126.31) .. (333.67,126.31) .. controls (381.81,126.31) and (422.36,158.82) .. (434.55,203.09) ;  
\draw    (148.33,191.93) .. controls (175.67,183.6) and (249,167.85) .. (292.61,188.91) ;
\draw    (375.27,188.24) .. controls (415,167.85) and (495.67,187.27) .. (519.67,192.6) ;
\draw  [draw opacity=0][dash pattern={on 0.84pt off 2.51pt}] (145.61,211.27) .. controls (155.45,116.09) and (235.89,41.87) .. (333.67,41.87) .. controls (431.21,41.87) and (511.5,115.73) .. (521.65,210.58) -- (333.67,230.93) -- cycle ; \draw  [dash pattern={on 0.84pt off 2.51pt}] (145.61,211.27) .. controls (155.45,116.09) and (235.89,41.87) .. (333.67,41.87) .. controls (431.21,41.87) and (511.5,115.73) .. (521.65,210.58) ;  
\draw    (232.76,202.3) .. controls (271.26,205.3) and (305.75,206.2) .. (333.67,230.03) ;
\draw  [dash pattern={on 4.5pt off 4.5pt}]  (145.67,210.7) .. controls (171,201.2) and (191.5,198.2) .. (232.76,202.3) ;
\draw    (333.17,229.53) .. controls (355.25,209.2) and (385.25,205.2) .. (434.08,201.8) ;
\draw  [dash pattern={on 4.5pt off 4.5pt}]  (434.08,201.8) .. controls (479.75,196.2) and (501.75,202.2) .. (523.17,208.87) ;
\end{tikzpicture}
\hspace{1mm}
\begin{tikzpicture}[scale=-0.02, xscale=-1]

\draw  [draw opacity=0][dash pattern={on 0.84pt off 2.51pt}] (273.68,198.71) .. controls (282.78,177.93) and (303.53,163.42) .. (327.67,163.42) .. controls (351.07,163.42) and (371.29,177.07) .. (380.8,196.84) -- (327.67,222.33) -- cycle ; \draw  [dash pattern={on 0.84pt off 2.51pt}] (273.68,198.71) .. controls (282.78,177.93) and (303.53,163.42) .. (327.67,163.42) .. controls (351.07,163.42) and (371.29,177.07) .. (380.8,196.84) ;  
\draw  [draw opacity=0][dash pattern={on 0.84pt off 2.51pt}] (227.06,193.52) .. controls (239.57,149.75) and (279.88,117.71) .. (327.67,117.71) .. controls (375.81,117.71) and (416.36,150.22) .. (428.55,194.49) -- (327.67,222.33) -- cycle ; \draw  [dash pattern={on 0.84pt off 2.51pt}] (227.06,193.52) .. controls (239.57,149.75) and (279.88,117.71) .. (327.67,117.71) .. controls (375.81,117.71) and (416.36,150.22) .. (428.55,194.49) ;  
\draw  [draw opacity=0][dash pattern={on 0.84pt off 2.51pt}] (139.61,202.67) .. controls (149.45,107.49) and (229.89,33.27) .. (327.67,33.27) .. controls (425.21,33.27) and (505.5,107.13) .. (515.65,201.98) -- (327.67,222.33) -- cycle ; \draw  [dash pattern={on 0.84pt off 2.51pt}] (139.61,202.67) .. controls (149.45,107.49) and (229.89,33.27) .. (327.67,33.27) .. controls (425.21,33.27) and (505.5,107.13) .. (515.65,201.98) ;  
\draw  [dash pattern={on 4.5pt off 4.5pt}]  (226.76,193.7) .. controls (265.26,196.7) and (247.8,196.2) .. (268,197.5) ;
\draw  [dash pattern={on 4.5pt off 4.5pt}]  (139.67,202.1) .. controls (165,192.6) and (185.5,189.6) .. (226.76,193.7) ;
\draw  [dash pattern={on 4.5pt off 4.5pt}]  (428.08,193.2) .. controls (473.75,187.6) and (495.75,193.6) .. (517.17,200.27) ;
\draw    (268,197.5) .. controls (291.33,201.5) and (302.17,200.67) .. (327.17,220.93) ;
\draw    (327.17,220.93) .. controls (347.8,205.8) and (355,202.6) .. (384.6,197.8) ;
\draw  [dash pattern={on 4.5pt off 4.5pt}]  (380.88,198.4) .. controls (410.08,194.8) and (409,194.6) .. (428.08,193.2) ;
\draw    (237.4,169.8) .. controls (262.67,168.17) and (245.33,194.5) .. (273.68,198.71) ;
\draw    (142.33,183.33) .. controls (169.4,175.8) and (195.4,169.4) .. (237.4,169.8) ;
\draw    (418.6,170.6) .. controls (453.4,170.6) and (486.6,178.6) .. (513.67,184) ;
\draw    (380.88,198.4) .. controls (406.33,197.5) and (396,168.5) .. (418.6,170.6) ;
\end{tikzpicture}
    \caption{The graphic on the left above depicts smooth CMC graphs over a strictly stable minimal hypersurface (solid line outside the inner dotted ball) and a CMC hypersurface with an isolated singularity arising as a graph over a minimal hypercone (in the inner dotted ball). The solid line in the graphic on the right depicts a portion of $\Sigma$, obtained by interpolating (via gluing with a cutoff function) the two hypersurfaces (solid lines) depicted in the left graphic together in an annulus away from the singular point.}
    \label{fig: gluings}
\end{figure}

In order to construct $\Sigma$ as described above, we utilise an adaptation of the constructions of \cite{CHS84} (carried out in \cite{I17}) which produce CMC hypersurfaces with isolated singularities as graphs over regular minimal hypercones. The hypersurface $\Sigma$ is then constructed over strictly stable minimal hypersurfaces with isolated singularities as produced in \cite[Theorem 3]{MN26} (based on \cite{S99} and \cite{S00}): near each singular point $\Sigma$ is the CMC graph produced by \cite{CHS84} (where the metric is Euclidean), and away from the singular set $\Sigma$ is comprised of CMC graphs (which exist by strict stability of the minimal hypersurface). These are then glued to the \cite{CHS84} examples in annular regions around the isolated singularities. A conformal change in metric ensures the resulting hypersurface $\Sigma$ is then a CMC hypersurface. This construction is depicted in Figure~\ref{fig: gluings}. 

\bigskip

When the regular minimal hypercones in the construction of $\Sigma$ above are further assumed to be strictly stable, we show that $\Sigma$ is strictly stable (as defined in Subsection~\ref{subsec: notation}). By the above-mentioned reduction using the techniques of \cite{MN26}, to establish Theorem~\ref{thm: CMC singular isop} it remains only to show that $\Omega$ is volume-constrained minimising in a tubular neighbourhood of $\Sigma$, which is Theorem~\ref{thm: strict stability to VCM}.

\bigskip

We briefly recall how \cite{W94} establishes the transfer of strict stability to volume-constrained minimisation in the smooth case. Consider a smooth $H$-CMC hypersurface $\Sigma = \partial \Omega$ and the natural area-type functional $\mathcal{F}_H(E) = \mathrm{Per}(E) - H \cdot \mathrm{Vol}(E)$ defined on Caccioppoli sets $E \subset M$. One then constructs a sequence of smooth barrier hypersurfaces $\partial S_j$ (whose connected components converge to $\Sigma$ smoothly as graphs as $j \rightarrow \infty$) and observes that, since $\Sigma$ is smooth, for any sequence $\Sigma_j = \partial \Omega_j$ of $\mathcal{F}_H$-minimisers in $S_j$ with $\Omega_j \Delta \Omega \subset S_j$, we have that $\Sigma_j \rightarrow \Sigma$ as $C^{1,\alpha}$ graphs as $j \rightarrow \infty$. By computing a Taylor expansion of the functional $\mathcal{F}_H$ for graphs over $\Sigma$, the strict stability assumption ensures that $\mathcal{F}_H(\Omega) < \mathcal{F}_H(\Omega_j)$ for sufficiently large $j \geq 1$ unless $\Sigma_j = \Sigma$; one thus deduces that $\Sigma$ is uniquely $\mathcal{F}_H$-minimising (and hence volume-constrained minimising) in $S_j$ for $j \geq 1$ sufficiently large.

\bigskip

To emulate this approach when $\Sigma$ is a strictly stable $H$-CMC hypersurface containing isolated singularities modelled on strictly stable, strictly minimising regular tangent cones, we will require a control on $\mathcal{F}_H(\Omega_j)$ in balls around the singular points (where graphical convergence will fail). This control rests both on the construction of smooth barrier hypersurfaces $\partial S_j$ (as above) and on a height-to-graphing radius bound for a (now potentially singular) sequence $\Sigma_j$ which now only converges as $C^{1,\alpha}$ graphs to $\Sigma$ on compact subsets away from the singular points. The smooth barrier hypersurfaces $\partial S_j$ are constructed in a similar manner to those in \cite[Lemma 4]{S00} by exploiting the CMC analogue of \cite{HS85} provided in \cite{L23} and \cite{BL25}. We note that a similar height-to-graphing radius bound was deduced in the proof of \cite[Theorem 4]{MNP25} in order to ensure volume-control along sequences of convergent isoperimetric regions; a key difference here is that we deduce a super-quadratic (as opposed to super-linear) exponent under the stronger assumption of strict stability on the tangent cones. With this estimate in hand, we can excise the singular set and compute a Taylor expansion of $\mathcal{F}_H$ up to an error controlled by our height-to-graphing radius estimate. This error term can then be absorbed in the calculation, and the first statement of Theorem~\ref{thm: strict stability to VCM} then follows. 

\bigskip

A similar line of reasoning to the above, now accounting for an imposed volume constraint and emulating the smooth case in \cite{G96}, also applies when assuming that $\Sigma$ is only weakly strictly stable, establishing only the weaker conclusion of volume-constrained minimisation in this case; this gives the second statement of Theorem~\ref{thm: strict stability to VCM}.

\bigskip

We now proceed as follows: Section~\ref{sec: local behaviour} records both the adaptations of the constructions of \cite{CHS84} to the CMC setting and results on the growth rates of CMC hypersurfaces near isolated singularities, before establishing Theorem~\ref{thm: strict stability to VCM}. Section~\ref{sec: global behaviour} constructs various examples of singular CMC hypersurfaces over singular minimal hypersurfaces and concludes by establishing Theorem~\ref{thm: CMC singular isop}.

\subsection{Notation}\label{subsec: notation}

We now collect some notation and definitions that will be used throughout this work:

\begin{itemize}
    \item Let $(M,g)$ be a smooth, closed (i.e.~compact with empty boundary) Riemannian manifold of dimension $n+1 \geq 8$. Without loss of generality we will assume that $M$ is connected. Given a smooth Riemannian metric, $g$, on $M$ we denote $\mathrm{dist}_g$ for the distance function on $M$, $B^g_r(p)$ the open geodesic ball in $M$ of radius $r > 0$ centred at $p \in M$, $A^g(p;r,s)$ for the annulus of radii $0 \leq r < s$ centred at $p \in M$, $\exp_x^g$ for the exponential map based at $x \in M$, $\mathrm{Vol}_g(E)$ for the volume of a measurable set $E \subset M$, and $\int_E u \, dV_g$ for the integral of an integrable function $u$, all with respect to the metric $g$. In $\mathbb{R}^{n+1}$ we write $g_\mathrm{eucl}$ for the Euclidean metric and omit metric dependence from our notation.

    \item By a \textbf{hypersurface}, $\Sigma \subset M$, we will mean a closed subset $\Sigma = \mathrm{Reg}(\Sigma) \cup \mathrm{Sing}(\Sigma)$, where the \textbf{regular set}, $\mathrm{Reg}(\Sigma)$, is a smooth (or potentially $C^{1,\alpha} \cap W^{2,2}$ for some $\alpha \in (0,1)$) embedded manifold of dimension $n$, and the \textbf{singular set}, $\mathrm{Sing}(\Sigma)$, is a (possibly empty) closed set of Hausdorff dimension at most $n-7$. We write $B_r^g(\mathrm{Sing}(\Sigma))$ for the points $x \in M$ with $\mathrm{dist}_g(x,\mathrm{Sing}(\Sigma)) < r$ and $\mathrm{graph}_\Sigma^g(u)$ the normal graph of $u$ over $\Sigma$. Such hypersurfaces will arise as boundaries of Caccioppoli sets (defined below), which we may regard as integral $n$-currents with $\mathbb{Z}$ coefficients when the regular part is oriented, or $n$-chains with $\mathbb{Z}_2$ coefficients otherwise. We will also write $|\Sigma|_g$ for the multiplicity-one varifold associated to $\Sigma$ and denote the varifold metric
    \begin{equation*}
        \mathbf{F}(V, W) = \sup\{\Vert V \Vert(f) - \Vert W \Vert(f) \, | \, f \in C^1(M), \vert f \vert \leq 1, \vert Df \vert \leq 1 \}, 
    \end{equation*}
    defined for integer rectifiable varifolds $V$ where $\|V\|$ denotes the weight measure of $V$.

    \item A measurable set $E \subset M$ is a \textbf{Caccioppoli set} if the indicator function of $E$ is of bounded variation, or equivalently if it has finite \textbf{perimeter} with respect to the metric $g$ so that
     \begin{equation*}
        \mathrm{Per}_g(E) = \sup \left\{\int_E \mathrm{div}_g X \, dV_g \: \bigg| \: X \in \Gamma(TM), g(X,X) \leq 1 \right\} < \infty,
     \end{equation*}
     where $\mathrm{div}_g$ is the divergence with respect to the metric $g$ and $\Gamma(TM)$ is the set of vector fields on $M$; we write $\mathrm{Per}_g(E;U)$ for the perimeter relative to a measurable set $U \subset M$ by restricting the integral above to $U$. We write $\mathcal{C}(S)$ for the set of Caccioppoli sets in a set $S$, and note that $\mathcal{C}(S)$ is independent of the choice of metric on $S$. For a Caccioppoli set, $U \in \mathcal{C}(M)$, we denote by $[U]$ the associated integral $(n+1)$-current with coefficients in $\mathbb Z_2$; then $\partial U =\mathrm{Spt}(\partial[U])$.

     \item We say that $\Omega \in \mathcal{C}(M)$ is an \textbf{isoperimetric region} of enclosed volume $t \in (0,\mathrm{Vol}_g(M))$ if
     \begin{equation*}
        \mathrm{Per}_g(\Omega ) = \inf_{E \in \mathcal{C}(M)}\{\mathrm{Per}_g(E) \: | \: \mathrm{Vol}_g(E) = t\}.
     \end{equation*}
     The existence of isoperimetric regions of a given enclosed volume in $M$ is then guaranteed by the direct method of the calculus of variations (e.g.~see \cite[Section 13.2]{M16}). Moreover, we say that $\Omega \in \mathcal{C}(M)$ is a \textbf{volume-constrained minimiser} in an open set $U \subset M$ if 
     \begin{align*}
        \mathrm{Per}_g(\Omega; U ) = \inf_{E\in \mathcal{C}(M)} \{\mathrm{Per}_g(E; U) \, | \, \Omega \Delta E \subset \subset U \text{ and  } \mathrm{Vol}_g(\Omega\cap U) = \mathrm{Vol}_g(E\cap U) \}.
    \end{align*}
       
     \item Given an isoperimetric region $\Omega$ as above, we denote by $\Sigma =\partial \Omega$ the closed, two-sided, embedded CMC hypersurface; by \cite{GMT83} and \cite{M03}, we ensure that $\mathrm{Sing}(\Sigma)$ is of Hausdorff dimension at most $n-7$. We denote by $\nu_g$ the outward-pointing unit normal to $\Sigma$ with respect to the metric $g$.

     \item We often consider the area-type functional
     $$\mathcal{F}_H(E) = \mathrm{Per}_g(E) - H \cdot \mathrm{Vol}_g(E),$$
     defined on $E \in \mathcal{C}(M)$ with respect to $g$. Critical points of $\mathcal{F}_H$ have boundaries with distributional mean curvature equal to $H$, and minimisers of $\mathcal{F}_H$ are volume-constrained minimising in $M$.
 
     \item Suppose that $\mathbf{C} \subset \mathbb{R}^{n+1}$ is a \textbf{regular} minimal hypercone, so that $\mathrm{Sing}(\mathbf{C}) \subset \{0\}$. We then denote the smooth, closed hypersurface $S = \mathbf{C} \cap \mathbb{S}^n \subset \mathbb{R}^{n+1}$ as its \textbf{link}. By parameterising $\mathbf{C}$ in radial coordinates, $(r,\omega) \in (0,\infty) \times S$, we decompose (as in \cite{S68}) the Jacobi operator, $L_{\mathbf{C}}$, of $\mathbf{C}$ as
    $$L_{\mathbf{C}} = \partial^2_r + \left(\frac{n-1}{r}\right)\partial_r + \frac{1}{r^2}(\Delta_S + |\mathrm{I\!I}_S|^2),$$
    where $\mathrm{I\!I}_S$ is the second fundamental form of $S$ in $\mathbb{S}^n$. We let $\mu_1 < \mu_2 \leq \mu_3 \leq \ldots \nearrow + \infty$ be the eigenvalues of $- (\Delta_S + \vert \mathrm{I\!I}_S \vert^2)$, and $\varphi_1, \varphi_2, \ldots$ be the corresponding $L^2(S)$-orthonormal eigenfunctions where $\varphi_1 > 0$. By \cite{S68}$, \mathbf{C}$ is \textbf{stable} if and only if $\mu_1\geq -\frac{(n-2)^2}{4}$ and we say (as in \cite{CHS84}) that $\mathbf{C}$ is \textbf{strictly stable} if $\mu_1 > -\frac{(n-2)^2}{4}$; the relationships above regarding inequalities for $\mu_1$ and the stability of $\mathbf{C}$ are made explicit in \cite[Section 4]{CHS84}. Moreover, we say (as in \cite{HS85}) that a minimal hypercone, $\mathbf{C}$, as above is \textbf{strictly minimising} if there is a $\Theta > 0$ such that for each $\varepsilon > 0$, whenever $T$ is an integer rectifiable current with $\mathrm{Spt}(T) \subset \mathbb{R}^{n+1} \setminus B_\varepsilon(0)$ and $\partial T = \partial (\mathbf{C} \cap B_1(0))$ we have
    $$|\mathbf{C} \cap B_1(0)| \leq |T| - \Theta\varepsilon^n.$$
    For $\Lambda > 0$ we denote $$\mathscr{C}_\Lambda = \{\text{stable regular minimal hypercones, $\mathbf{C} \subset \mathbb{R}^{n+1}$, with $\Vert \mathbf{C} \Vert(B_1(0)) \leq \Lambda$}\}$$
    and write $\mathscr{C}$ for the stable regular minimal hypercones without density bound.
    
    \item By \cite{CHS84}, a general \textbf{Jacobi field}, $v \in C_\mathrm{loc}^\infty(\mathbf{C} \setminus \{0\})$, on $\mathbf{C}$ (i.e.~a solution to $L_\mathbf{C}v = 0$) is given by a linear combination of homogeneous Jacobi fields
    \begin{equation*}
        v(r, \omega) = \sum_{j \geq 1} ( v_j^+(r) + v_j^-(r)) \varphi_j(\omega), 
    \end{equation*}
    where for each $j \geq 1$ we write
    \begin{equation*}
        v_j^+(r) = c_j^+ \cdot r^{\gamma_j^+} \qquad \text{and} \qquad v_j^-(r) = \begin{cases}
    c_j^- \cdot r^{\gamma_j^-}, & \text{if $\mu_j > - \frac{(n - 2)^2}{4}$}, \\
    c_j^- \cdot r^{\gamma_j^-} \log(r) & \text{if $\mu_j = -  \frac{(n - 2)^2}{4}$}, 
        \end{cases}
    \end{equation*}
    for some constants $c_j^\pm \in \mathbb{R}$, and
    \begin{equation*}
        \gamma_j^\pm = \gamma_j^\pm(\mathbf{C}) = - \frac{n - 2}{2} \pm \sqrt{\mu_j + \frac{(n - 2)^2}{4}}.
    \end{equation*}
    We call the collection of exponents, $\gamma^\pm_j$ for $j \geq 1$, the \textbf{asymptotic spectrum} of $\mathbf{C}$, which we denote by $\Gamma(\mathbf{C})$. We observe that if we choose $\gamma\in(-\frac{n-2}{2},\gamma_1^+(\mathbf C))$, then whenever $\mathbf{C}$ is strictly stable we have that
    \begin{equation}\label{eqn: super-quadratic exponent}
        \frac{n}{1-\gamma} >2.
    \end{equation}

    \item Following \cite{S99} we define the weighted Sobolev space $W^{1,2,-2}_0(\Sigma)$ to be the completion of $C^1_c(\mathrm{Reg}(\Sigma))$ with respect to the norm
    $$
    \|u\|_{W^{1,2,-2}(\Sigma)}^2 = \int_\Sigma (|\nabla u|^2 + u^2 \rho^{-2}) \, dA_g,
    $$
    where $\rho : M \rightarrow [0,\infty)$ is the distance to $\mathrm{Sing}(\Sigma)$. We then say that $\Sigma$ is \textbf{strictly stable} if $\Sigma$ is a singular CMC hypersurface and
    there is $\lambda > 0$ such that
    \begin{equation}\label{eqn: strict stability inequality}
       \delta^2\mathrm{Area}_\Sigma(u) =  \int_\Sigma (|\nabla u|^2 - (|\sff_\Sigma|^2 + \mathrm{Ric}_g(\nu,\nu))u^2) \, dA_g 
        \geq \lambda \int_\Sigma u^2 \rho^{-2} \, dA_g
    \end{equation}
    for all $u \in W^{1,2,-2}_0(\Sigma)$; if \eqref{eqn: strict stability inequality} holds only for those $W^{1,2,-2}_0$ functions with zero integral, we say that $\Sigma$ is \textbf{weakly strictly stable}. We note that strict stability (and weak strict stability) with respect to the $\rho^{-2}$-weighted $L^2$ and $W^{1,2,-2}$ norms are equivalent up to a change in the choice of $\lambda$. Indeed, using the fact that $|\sff_\Sigma|^2 + \mathrm{Ric}_g(\nu,\nu) \leq C\rho^{-2}$ we have
    $$
    \int_\Sigma |\nabla u|^2 \, dA_g = \delta^2\mathrm{Area}_\Sigma(u) + \int_\Sigma (|\sff_\Sigma|^2 + \mathrm{Ric}_g(\nu,\nu))u^2 \, dA_g \leq \left(1+\frac{C}{\lambda}\right)       \delta^2\mathrm{Area}_\Sigma(u),
    $$
    and so
    $$
    \|u\|_{W^{1,2,-2}(\Sigma)}^2 = \int_\Sigma |\nabla u|^2 \, dA_g + \int_\Sigma u^2\rho^{-2} \, dA_g \leq \left(1+\frac{C+1}{\lambda}\right)       \delta^2\mathrm{Area}_\Sigma(u),
    $$
    so \eqref{eqn: strict stability inequality} can be phrased with the squared $W^{1,2,-2}$ norm of $u$ up to replacing $\lambda$ with $\frac{\lambda}{\lambda + C + 1}$.
    
    \item We let $\mathrm{inj}(x;M,g)$ denote the injectivity radius at $x \in M$ with respect to the metric $g$ and for each hypersurface $\Sigma = \partial\Omega$ we define the \textbf{regularity scale} $r_S(x)$ depending on $M,g,$ and $\Sigma$, at a point $x \in \mathrm{Reg}(\Sigma)$ to be the supremum among all $r \in (0, \mathrm{inj}(x;M,g)/2)$ such that both of the following properties hold:
        \begin{itemize}
            \item $r^{2} \Vert \mathrm{Rm}_g\Vert_{C^0(B_r^g(x))}+r^3\Vert \nabla \mathrm{Rm}_g\Vert_{C^0(B_r^g(x))} \le 1/10$, where $\mathrm{Rm}_g$ denotes the Riemann curvature tensor of $M$ with respect to the metric $g$.
            \item After pulling back by $\exp^g_x$ to $T_xM$ we have
                $$\frac1r (\exp^g_x)^{-1}(\Sigma) \cap B_1(0) = \mathrm{graph}^{g_\mathrm{eucl}}_L (u) \cap B_1(0),$$
          for some hyperplane $L \subset T_xM$ and $u \in C^3(L)$ with $\Vert u\Vert_{C^3(L)} \le 1/10$.
        \end{itemize}

    \item For $\phi\in C_{\mathrm{loc}}^k(\Sigma)$ with $k = 0,1,2$ we define the following norm on $E \subset \Sigma$ to be
        $$\Vert \phi\Vert_{C^k_*(E)}= \sup_{x \in E} \sum_{j=0}^k r_S(x)^{j-1}|\nabla^j \phi(x)|,$$
    which is invariant under the scaling of $\phi$ to $\lambda \phi$ and $g$ to $\lambda^2g$ for $\lambda > 0$.

    \item By \cite[Theorem 5]{S83B}, for any $p\in \mathrm{Sing}(\Sigma),$ we may express $\Sigma$ in \textbf{conical coordinates} near $p$ over its tangent cone $\mathbf{C}_p\Sigma$; precisely, for any $\varepsilon >0$, there exist $\phi\in C^2(\mathbf{C}_p\Sigma \setminus \{0\})$ and $r_p(\varepsilon)>0$ such that $\Vert \phi\Vert_{C_*^2((\mathbf{C}_p \cap B_{r_p(\varepsilon)}(0)) \setminus \{0\})}< \varepsilon$ and
    $$ \mathrm{graph}^{g_\mathrm{eucl}}_{\mathbf{C}_p\Sigma} (\phi)\cap 
    B_{r_p(\varepsilon)}(0) = (\exp_p^g)^{-1}\left(\Sigma\ \cap B^g_{r_p(\varepsilon)}(p)\right).$$ 
\end{itemize}

\section{Local behaviour}\label{sec: local behaviour}

In this section we recall the constructions of \cite{CHS84} (and the extension due to \cite{I17}) which produce prescribed mean curvature hypersurfaces with isolated singularities modelled on regular minimal hypercones. We then record the definition of growth rates for such hypersurfaces (c.f.~\cite{MNP25} and \cite{LW25}) and establish a growth rate monotonicity for local isoperimetric regions with respect to barriers. With this growth rate monotonicity in hand, we conclude the section by establishing both statements in Theorem~\ref{thm: strict stability to VCM}.

\subsection{Constructions over regular minimal hypercones}

We first recall the construction of \cite{CHS84} which produces singular minimal hypersurfaces modelled on regular minimal hypercones, explaining how modifications of their techniques produce prescribed mean curvature hypersurfaces; this extension was carried out explicitly in \cite{I17}.

\bigskip

In \cite{CHS84} examples of (embedded) minimal hypersurfaces in $\mathbb{R}^{n+1}$ with isolated singularities at the origin are constructed by perturbing the link, $\mathbf{C} \cap \mathbb{S
}^n$, of a given regular minimal hypercone $\mathbf{C}$ and solving $\mathcal{M}_\mathbf{C}u = 0$ where $\mathcal{M}_\mathbf{C}$ is the mean curvature operator of graphs of functions on $\mathbf{C}$. When the functions $u$ has appropriate decay towards the origin, the resulting graph 
$$\mathrm{graph}_{\mathbf{C}}(u) = \{x + u(x)\nu(x) \, | \, x \in \mathbf{C} \setminus \{0\}\},$$
where $\nu$ is an outward unit normal to $\mathbf{C}$, is embedded. Minimal hypersurfaces are constructed by first finding (by separation of the radial variable)
solutions of the inhomogeneous Jacobi field equation with appropriately fast decay to zero on approach to the origin, applying Schauder
estimates for small data in this linear problem, and utilising an appropriate fixed-point theorem to find the desired solutions of the non-linear problem. The same fixed-point arguments work for small enough inhomogeneous data $f$ and provide solutions to the prescribed mean curvature equation $\mathcal{M}_\mathbf{C}(u) = f$ (shown explicitly in \cite{I17}). More precisely, the results of \cite{CHS84} and \cite{I17} show:

\begin{theorem}[\cite{CHS84},\cite{I17}]\label{thm: CMC CHS} 
    For $n\ge 3$ and $\alpha \in (0,1)$, if $\mathbf{C} \subset \mathbb{R}^{n+1}$ is a regular minimal hypercone, then there exist constants $\delta \in [0,1) \setminus \Gamma(\mathbf{C})$ depending on $n$ and $\mathbf{C}$, and $\varepsilon \in (0,1), C > 0$ depending on $n, \alpha$, and $\mathbf{C}$ such that the following holds. For all 
    $f \in C^{0,\alpha}(\mathbf{C} \cap B_1(0) \setminus \{0\})$ with
    $$
    \sup_{0<r<\frac{1}{2}} \|f(r\, \cdot)\|_{C^{0,\alpha}(\mathbf{C} \cap A(0;1,2))}\le \varepsilon,
    $$
    there exists $u \in C^{2,\alpha}(\mathbf{C} \cap B_1(0) \setminus \{0\})$, with $\mathrm{graph}_{\mathbf{C}}(u)$ an embedded hypersurface, solving
    $$
    \mathcal{M}_{\mathbf{C}} u = f \quad \text{in } \mathbf{C} \cap B_1(0) \setminus \{0\},
    $$
    and satisfying the decay condition
    $$
    \sup_{0<r<\frac{1}{2}} r^{-2+\delta}\,\|u(r\, \cdot)\|_{C^{2,\alpha}(\mathbf{C} \cap A(0;\frac{1}{2},1))}< C \sup_{0 < r < \frac{1}{2}}\|f(r\, \cdot)\|_{C^{0,\alpha}(\mathbf{C} \cap A(0;\frac{1}{2},1))}.
    $$
    Moreover, when $f$ is constant and $\mathbf{C}$ is strictly stable, then, after potentially choosing $\varepsilon \in (0,1)$ smaller, $\mathrm{graph}_{\mathbf{C}}(u)$ is strictly stable in the sense of \eqref{eqn: strict stability inequality} in $B_1(0)$.
\end{theorem}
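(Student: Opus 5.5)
The existence and decay parts are a fixed-point argument, following \cite{CHS84} with an inhomogeneous term as in \cite{I17}; the stability claim is then checked by perturbing a Hardy-type inequality on the cone.

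First write the graph mean curvature operator as
\[
\mathcal{M}_\mathbf{C}(u) = L_\mathbf{C} u + Q(u),
\]
where $Q$ is quadratic in $(r^{-1}u,\nabla u, r\nabla^2 u)$ with coefficients homogeneous of degree $-1$ in $r$. Fix $\delta\in[0,1)\setminus\Gamma(\mathbf{C})$ with $2-\delta$ not an exponent $\gamma_j^\pm$. Introduce the weighted Hölder space $X$ with norm
\[
\|u\|_X=\sup_{0<r<1/2} r^{-2+\delta}\|u(r\cdot)\|_{C^{2,\alpha}(A(0;1/2,1))},
\]
and the target space $Y$ with norm $\sup_r r^{\delta}\|f(r\cdot)\|_{C^{0,\alpha}}$. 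Bounded $f$ lies in $Y$ with comparable norm.

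The linear step is to solve $L_\mathbf{C}v=f$ with $\|v\|_X\le C\|f\|_Y$. Decompose $f=\sum f_j(r)\varphi_j$ and solve each radial ODE
\[
v_j''+\tfrac{n-1}{r}v_j'-\tfrac{\mu_j}{r^2}v_j=f_j
\]
by variation of parameters using $r^{\gamma_j^\pm}$. For each $j$, integrate from $0$ or from $1/2$ according to whether $\gamma_j^\pm$ lies below or above $2-\delta$. This keeps every mode at the rate $r^{2-\delta}$, and the choice $2-\delta\notin\Gamma(\mathbf{C})$ gives constants uniform in $j$. One then obtains $L^2$ bounds on dyadic annuli, upgrades them to $C^{2,\alpha}$ by scaled interior Schauder estimates, and imposes zero boundary data at $r=1$ to fix the remaining homogeneous parts. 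This uniform-in-$j$ weighted estimate is the main obstacle; it is exactly where \cite{CHS84} does the real work, and I would import it.

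Next define $T(u)=L_\mathbf{C}^{-1}(f-Q(u))$. Since $r^{-1}|u|\lesssim r^{1-\delta}\|u\|_X$ is small, one has
\[
\|Q(u)-Q(w)\|_Y\le C(\|u\|_X+\|w\|_X)\|u-w\|_X.
\]
Hence $T$ is a contraction on a ball of radius $2C\varepsilon$ in $X$ when $\varepsilon$ is small, and its fixed point satisfies the stated decay. Embeddedness follows because $|u|/r+|\nabla u|\lesssim\varepsilon r^{1-\delta}$ is small, so the graph map is injective near the origin by the scale-invariant smallness.

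For the stability claim, use strict stability of the cone, $\mu_1>-\frac{(n-2)^2}{4}$. On $\mathbf{C}$, separation of variables together with the sharp Hardy inequality $\int|\partial_r u|^2\ge\frac{(n-2)^2}{4}\int u^2r^{-2}$ gives
\[
\int_\mathbf{C}\bigl(|\nabla u|^2-|\sff_\mathbf{C}|^2u^2\bigr)\ge\lambda_0\int_\mathbf{C}u^2r^{-2},
\]
with $\lambda_0=\mu_1+\frac{(n-2)^2}{4}>0$. Now transplant $u$ from $\mathrm{graph}_\mathbf{C}(u)\cap B_1$ to $\mathbf{C}$ via the normal projection. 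The induced metric, the term $|\sff|^2$ (the ambient Ricci term vanishes in Euclidean space) and $\rho$ all differ from their cone counterparts by relative errors $O(\varepsilon)$ in the scale-invariant norm, because the decay gives $r|\nabla^2u|+|\nabla u|\lesssim\varepsilon$. Here $\rho$ is comparable to $r$, since the only singular point is the origin. Hence the quadratic form on the graph is at least $(\lambda_0-C\varepsilon)\int u^2\rho^{-2}$, which is positive after shrinking $\varepsilon$. Finally, a density argument extends the inequality to $W^{1,2,-2}_0$.
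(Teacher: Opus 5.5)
Your proposal is correct and follows the route the paper indicates. The paper proves nothing itself here: it defers existence and decay to the separation-of-variables, Schauder and fixed-point scheme of \cite{CHS84}, with the inhomogeneous term handled as in \cite{I17}, and it asserts the stability clause without argument; your Hardy-type perturbation (giving $\lambda_0=\mu_1+\frac{(n-2)^2}{4}>0$ on $\mathbf{C}$, with only $O(\varepsilon)\rho^{-2}$ errors on the graph) is a sound way to supply that clause. One small correction: do not impose zero boundary data at $r=1$ in every mode, because for the finitely many modes with $\mathrm{Re}\,\gamma_j^+<2-\delta$ no homogeneous solution can be added without destroying the $r^{2-\delta}$ decay (this includes $j=1$, which is excited whenever $f$ is a non-zero constant since $\varphi_1>0$), so just leave those boundary values as determined, which the theorem allows, and for stability on all of $B_1(0)$ obtain the scale-invariant smallness up to $\partial B_1(0)$ (e.g.\ by solving on a larger ball), since the stated decay only controls $u$ in $B_{1/2}(0)$.
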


\subsection{Growth rates over regular minimal hypercones}

We now sharpen results in \cite[Appendix A]{MNP25} concerning the growth rates near isolated singular points of converging sequences of hypersurfaces that will be utilised in proving Theorem~\ref{thm: strict stability to VCM}. We first recall some definitions:

\begin{definition} \label{definition: asymptotic and growth rates} Given a hypersurface $\Sigma \subset M$, $p\in \mathrm{Sing}(\Sigma)$ isolated, $v\in L^2_{\mathrm{loc}}(\Sigma \setminus \{p\})$, $\mathbf{C} \in \mathscr{C}$, $w \in C^2_{\mathrm{loc}}(\mathbf{C} \setminus \{0\})$, $r > 0$, and $K > 1$, we define:

\begin{itemize}
    \item The \textbf{growth rate} of $v$ at $p$, Given $\gamma \in \mathbb{R}$, to be
        $$ J_{K;\Sigma,g}^{\gamma}(v;r) = \left(\int_{A(p;K^{-1}r, r)} v^2 \cdot \rho^{- n - 2\gamma} \, dA_g\right)^{\frac{1}{2}},$$
    where $\rho(x) = \mathrm{dist}_g(x, p)$. 

    \item The \textbf{asymptotic rate} of $w$ at infinity to be
        $$\mathcal{AR}_\infty(w) = \inf\left\{ \gamma \, \bigg| \, \lim_{s\nearrow +\infty} \int_{A(0;s,2s)} w^2(x) \cdot |x|^{-n-2\gamma} \, d\|\mathbf{C}\|(x) = 0 \right\},$$ 
    and the \textbf{growth rate} of $w$ on $\mathbf{C}$, given $\gamma \in \mathbb{R}$, to be
    \begin{equation*}
        J_{K;\mathbf{C}}^{\gamma}(w;r) = \left(\int_{A(0;K^{-1}r, r)} w^2(x) \cdot \vert x \vert^{- n - 2\gamma} \, d\|\mathbf{C}\|(x)\right)^\frac{1}{2}.
    \end{equation*}

    \item For a stationary integral varifold $V$ in $\mathbb{R}^{n+1} \setminus B_{R_0}$ for some $R_0 > 0$, we say that $V$ is \textbf{asymptotic to $\mathbf{C}$ at infinity} if there is $w \in C^2_\mathrm{loc}(\mathbf{C} \setminus \{0\})$ with
    $$V \mres (\mathbb{R}^{n+1} \setminus B_{R_1}) = |\mathrm{graph}^{g_\mathrm{eucl}}_{\mathbf{C} \setminus B_{R_1}}(w)|\mres (\mathbb{R}^{n+1} \setminus B_{R_1}) $$ for some $R_1 \geq R_0$ and $\Vert w\Vert_{C^2_*(A(0;R,2R))} \rightarrow 0$ as $R \rightarrow \infty$; for such a varifold $V$ asymptotic to $\mathbf{C}$ at infinity we write $\mathcal{AR}_\infty(V) = \mathcal{AR}_\infty(w)$. 
\end{itemize}
For the above definitions we adopt the convention that $\sup \emptyset = -\infty$ and $\sup \mathbb{R} = +\infty$.
\end{definition}

\begin{remark}\label{rem: J on cone to J on cmc} By expressing $\Sigma$ in conical coordinates over its tangent cone $\mathbf{C}_p\Sigma$, there is some constant $C > 0$, depending on the radius in which these coordinates are defined, such that
\begin{align*}
       C^{-1}J^{\gamma}_{K;\mathbf{C}}(u;r)^2 \leq J^{\gamma}_{K;\Sigma,g}(u;r)^2
\leq CJ^{\gamma}_{K;\mathbf{C}}(u;r)^2.
   \end{align*}
\end{remark}

We will utilise the following monotonicity property for the growth rate, whose statement and proof are an adaptation of the dichotomy appearing in \cite[Lemma F.5]{LW25}:

\begin{lemma}\label{lem: dichotomy adapted}
    For $\sigma > 0$, $\Lambda>\frac{n-2}{2}$, and $K > 2$ (chosen based on $\sigma,\Lambda$ as in \cite[Corollary 6.2]{LW25}), there exist constants $\delta, C > 0$, depending on $\sigma, \Lambda$, and $K$, for which the following holds. Consider a strictly minimising $\mathbf{C} \in \mathscr{C}_\Lambda$ and $\gamma \in (-\frac{n-2}{2}, \gamma_1^+(\mathbf{C}))$ such that
    $$
    \mathrm{dist}_{\mathbb{R}}\left(\gamma, \Gamma(\mathbf{C}) \cup \{-(n - 2)/2\}\right) \geq \sigma. 
    $$
    Let $g$ be a Riemannian metric on $B_2(0)$ so that in normal coordinates we have $\|g - g_{\mathrm{eucl}}\|_{C^4} \leq \delta$. Suppose that $\Omega \in \mathcal{C}(B_2(0))$, with $\Sigma = \partial \Omega$ an $H$-CMC hypersurface with respect to $g$ such that $\mathrm{Sing}(\Sigma) \cap B_2(0) = \{0\}$ and
    $$ |\Sigma| \mres B_2(0) = |\mathrm{graph}_\mathbf{C}(w)| \mres B_2(0) \text{ with } \|w\|_{C^2_*(\mathbf{C} \cap B_2(0))} < \delta.$$
    Suppose that $\{\Omega_j\}_{j \geq 1} \subset \mathcal{C}(B_2(0))$, with $\Sigma_j = \partial \Omega_j$ a $C^{1,\alpha} \cap W^{2,2}$ hypersurface with distributional mean curvature $H_j \in L^\infty(\Sigma_j)$ with respect to $g$ for each $j \geq 1$, such that $\|H_j - H\|_{L^\infty(\Sigma_j \cap B_2(0))} \rightarrow 0$ and $\mathbf{F}(|\Sigma|\mres B_2(0),|\Sigma_j| \mres B_2(0)) \rightarrow 0$ as $j \rightarrow \infty$. Then, if we define for each $j \geq 1$ the graphing radii $\tau_j$ and functions $v_j$ to be
    \begin{equation}\label{eqn: graphing radius adapted}
        \tau_j = \inf\{s \in (0,1) \mid |\Sigma_j| \mres A(0;s,1) = |\mathrm{graph}_{\Sigma}(v_j)| \mres A(0;s,1) \text{ and } \|v_j\|_{C^1_*(\Sigma \cap A(0;s,1))} \leq \delta \}
    \end{equation}
    we have for sufficiently large $j \geq 1$ and $t \in (2K^3\tau_j,1)$ that 
    \begin{equation}\label{eqn: J monotonicity with mean curvature difference}
        J^\gamma_{K;\Sigma,g}(v_j;K^{-1}t) \leq \max\{J^\gamma_{K;\Sigma,g}(v_j;t), C\|H_j - H\|_{L^\infty(\Sigma_j \cap B_2(0))}\}.
    \end{equation}
\end{lemma}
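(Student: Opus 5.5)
We argue by contradiction and compactness, following the three-annulus dichotomy of \cite[Lemma F.5]{LW25}, as adapted in \cite[Appendix A]{MNP25}, with the mean curvature difference $H_j - H$ carried as an inhomogeneous term. The graphs satisfy the linearised equation up to this term. Since $\Sigma$ and $\Sigma_j$ are both graphs over $\Sigma$ on $A(0;\tau_j,1)$ with $\|v_j\|_{C^1_*}\le\delta$, the difference of the mean curvature operators gives
\[
L_\Sigma v_j = (H_j - H) + Q(v_j) \quad \text{on } \Sigma \cap A(0;\tau_j,1),
\]
in the weak $W^{2,2}$ sense. Here $Q$ is quadratic in $(v_j,\nabla v_j,\nabla^2 v_j)$ with coefficients scaling like $\rho^{-1}$, so that $|Q(v_j)| \le C\delta \rho^{-1}(|\nabla^2 v_j|\rho + |\nabla v_j| + \rho^{-1}|v_j|)$. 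The closeness $\|g-g_{\mathrm{eucl}}\|_{C^4}\le\delta$ and $\|w\|_{C^2_*}<\delta$ lets us compare $L_\Sigma$ with $L_{\mathbf C}$ on each annulus, with error $O(\delta)$ in scale-invariant norms. Remark~\ref{rem: J on cone to J on cmc} lets us pass freely between $J_{K;\Sigma,g}^\gamma$ and $J_{K;\mathbf C}^\gamma$.

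Suppose the claim fails. Then there are $\delta_i\to 0$, $C_i \to\infty$, data as in the statement, indices $j_i$ and radii $t_i\in(2K^3\tau_{j_i},1)$ such that
\[
J^\gamma(v_{j_i};K^{-1}t_i) > J^\gamma(v_{j_i};t_i) \quad\text{and}\quad J^\gamma(v_{j_i};K^{-1}t_i) > C_i\|H_{j_i}-H\|_{L^\infty}.
\]
Rescale by $t_i^{-1}$ and normalise, setting $\tilde v_i = v_{j_i}(t_i\,\cdot)/(t_i\, J^\gamma(v_{j_i};K^{-1}t_i))$ up to the appropriate power of $t_i$ dictated by the homogeneity of $J^\gamma$. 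Because $t_i > 2K^3\tau_{j_i}$, the rescaled graphs are defined on $A(0;2K^2\cdot\tfrac{\tau_{j_i}}{t_i}... ,1)\supset A(0;K^{-3},1)$, up to harmless constants. The rescaled mean curvature term is bounded by $t_i\|H_{j_i}-H\|/J \lesssim C_i^{-1}\to 0$, using $t_i \le 1$ and the homogeneity. Interior $L^2\to W^{2,2}$ and Schauder estimates for the linearised equation hold, since $Q$ is absorbed for $\delta_i$ small. These give a subsequence with $\tilde v_i \to v$ in $C^1_{\mathrm{loc}}$ on $\mathbf C\cap A(0;K^{-2},1)$, with $L_{\mathbf C} v = 0$, and by convergence of the $L^2$ quantities
\[
J^\gamma_{K;\mathbf C}(v;K^{-1}) = 1 \ge J^\gamma_{K;\mathbf C}(v;1).
\]

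The limit is a Jacobi field on an annulus of $\mathbf C$. Expanding $v$ in the eigenbasis $\varphi_k$ as in the Notation, each mode contributes $c_k^+ r^{\gamma_k^+} + c_k^- r^{\gamma_k^-}$ (or the logarithmic form when $\mu_k = -\tfrac{(n-2)^2}{4}$). The condition $\mathrm{dist}(\gamma,\Gamma(\mathbf C)\cup\{-\tfrac{n-2}{2}\})\ge\sigma$ then gives a three-annulus inequality, which is exactly the content of \cite[Corollary 6.2]{LW25} with $K$ chosen from $\sigma,\Lambda$. Modes with $\gamma_k>\gamma$ decrease $J^\gamma$ by a factor $\ge K^{\sigma}$ when passing inward from $t$ to $K^{-1}t$. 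Modes with $\gamma_k<\gamma$ increase it by such a factor.

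The main obstacle is ruling out the growing modes, i.e.\ showing that $v$ has no component with exponent below $\gamma$. This is where $\gamma<\gamma_1^+(\mathbf C)$ and $\gamma > -\tfrac{n-2}{2}$ enter, together with strict minimality of $\mathbf C$. I would first use the fact that the exponents below $\gamma$ are $\gamma_k^-$ for all $k$ and $\gamma_k^+$ only for those $k$ with $\gamma_k^+<\gamma<\gamma_1^+$; the latter set is empty since $\gamma_k^+\ge\gamma_1^+$. So the growing part consists only of $r^{\gamma_k^-}$ terms, which decay slower than $r^{-(n-2)/2}$ towards the origin. The sequence $\Sigma_j$ is (locally) $\mathcal F_{H_j}$-minimising relative to the barrier and converges to a cone that is strictly minimising. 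Hence, via the CMC version of \cite{HS85} from \cite{L23}, \cite{BL25}, on the smaller annuli the $\Sigma_j$ lie to one side and are controlled by Hardt--Simon-type foliation leaves, which decay at rate $\gamma_1^+$. This excludes $\gamma_k^-$ components in the normalised limit, as in \cite{LW25}.

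Once the growing modes are excluded, the decaying part of $v$ forces
\[
J^\gamma(v;K^{-1}) \le K^{-\sigma} J^\gamma(v;1) < J^\gamma(v;1),
\]
which contradicts $J^\gamma_{K;\mathbf C}(v;K^{-1}) = 1 \ge J^\gamma_{K;\mathbf C}(v;1)$ unless $v\equiv 0$. But $v\not\equiv 0$ by the normalisation, so the claim holds with $\delta, C$ depending only on $\sigma,\Lambda,K$. The uniformity over $\mathbf C \in \mathscr C_\Lambda$ follows from the compactness of $\mathscr C_\Lambda$ and the uniform spectral gap $\sigma$.
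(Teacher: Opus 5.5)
Your linearisation, the rescaling, and the observation that the rescaled term $t\|H_j-H\|$ is small relative to $J^\gamma$ whenever \eqref{eqn: J monotonicity with mean curvature difference} fails are fine. With only $C^1_*$ control, though, you should use the divergence-form equation of \cite[Theorem B.1]{LW25} rather than a $Q$ involving $\nabla^2 v_j$.

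The genuine gap is the step you call the main obstacle, and it cannot be closed the way you propose. A Jacobi field on a fixed annulus can contain growing modes: $r^{\gamma_1^-}\varphi_1$ satisfies $J^\gamma_{K;\mathbf C}(v;K^{-1})>J^\gamma_{K;\mathbf C}(v;1)$. So no contradiction can come from a limit on $\mathbf C\cap A(0;K^{-3},1)$. The three-annulus inequality of \cite[Corollary 6.2]{LW25} only says that growth at one scale forces growth at the next smaller scale.

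Your exclusion via one-sidedness and Hardt--Simon-type leaves uses hypotheses the lemma does not have. Nothing is assumed about minimisation of the $\Sigma_j$. Nothing places them on one side of $\Sigma$; in the application $\Omega_j$ minimises in $S_j\supset\Sigma$, so $\Sigma_j$ may cross $\Sigma$. Even granting such trapping, a leaf barrier bounds $|v_j|$ absolutely in terms of the leaf parameter. Your $\tilde v_i$ are normalised by $J^\gamma(v_{j_i};K^{-1}t_i)$, which may be arbitrarily small compared to that parameter. The barrier therefore says nothing about the normalised limit, and \cite{LW25} does not argue this way either.

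What is missing is the multi-scale argument in which strict minimality is used globally. Fix $\delta$ so that the error terms satisfy \cite[(F.14)]{LW25}, and fix $C$ as in the paper. Suppose \eqref{eqn: J monotonicity with mean curvature difference} fails at $t$. Then the inhomogeneity can be absorbed into $B_1$, \cite[Corollary 6.2]{LW25} gives failure again at $K^{-1}t$, and the failure condition persists, so you can iterate down to scales comparable to $\tau_j$. The bound $\|v_j\|_{C^1_*}\le\delta$ gives $J^\gamma(v_j;r)\lesssim\delta r^{1-\gamma}\to 0$ as $r\to0$, which forces $\tau_j>0$. The blow-ups $\tau_j^{-1}|\Sigma_j|$ then subconverge to a stationary integral varifold $V_\infty\neq\mathbf C$, asymptotic to $\mathbf C$ at infinity, with $\mathcal{AR}_\infty(V_\infty)<\gamma<\gamma_1^+(\mathbf C)$. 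This is excluded by \cite[Theorem 4.5(1)]{W20} because $\mathbf C$ is strictly minimising.

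This structure is also why the paper chooses $\delta,C$ directly and argues by contradiction only in $j$, for a fixed cone. Your single contradiction sequence with $\delta_i\to0$, $C_i\to\infty$ and varying cones would need strict minimality and regularity to survive in a limit cone, and you have neither.
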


\begin{proof}
    We first note that in the case where $\Sigma_j$ has mean curvature $H$ (i.e.~$H_j = H$) for all $j \geq 1$, the proof reduces exactly to the statement of \cite[Lemma F.5]{LW25} since the equation for the difference of the graphs (see \eqref{eqn: weak form with mean curvature difference}) has right-hand side zero. The alternative in the dichotomy there is then ruled out by the strict minimisation assumption on $\mathbf{C}$ by \cite[Theorem 4.5(1)]{W20} (explained in full for the general case below).

    \bigskip

    The general case also follows by adapting the proof of \cite[Lemma F.5]{LW25} and appealing to \cite[Corollary 6.2]{LW25}. With the notation there we take $g_j^0 = g_j^1 = g$, $\Phi$ for $w$, and set $w_j = v_j \circ \Phi$ (the height of $\Sigma_j$ over $\Sigma$ pulled back to $\mathbf{C}$). Let $\varepsilon, K > 0$ be as in \cite[Corollary 6.2]{LW25}, and choose $\delta > 0$ sufficiently small so that, by \cite[Theorem B.1 and Remark 6.1]{LW25}, the error terms $\Vec{B}_0 ,B_1$ below satisfy \cite[(F.14)]{LW25} with $\varepsilon$ replaced by $\varepsilon/2$.
    Applying \cite[Theorem B.1]{LW25} we see that on $\mathbf{C} \cap A(0;2\tau_j,1)$ the functions $w_j$ weakly solve
    \begin{equation}\label{eqn: weak form with mean curvature difference}
        \mathrm{div}_\mathbf{C} (\nabla_\mathbf{C} w_j + \Vec{B}_0(x)) +|\mathrm{I\!I}_\mathbf{C}|^2 w_j + |x|^{-1} B_1(x) = \widetilde{H}_j,
    \end{equation}
    where $\Vec{B}_0 ,B_1$ satisfy the estimate \cite[(F.14)]{LW25} with $\frac{\varepsilon}{2}$. The right-hand side of \eqref{eqn: weak form with mean curvature difference} above is initially $(H_j\circ\Phi_j-H)F+H(F-F')$, where $F$ and $F'$ are the area densities of $\Sigma_j$ and $\Sigma$ respectively and $\Phi_j$ is the graph of $\Sigma_j$ over $\mathbf C$ as in \cite[Theorem B.1]{LW25}. However, since $|H|\le C_0\delta$ for some constant $C_0 > 0$ (since $\|g - g_{\mathrm{eucl}}\|_{C^4} \leq \delta$ and $\|w\|_{C^2_*(\mathbf{C} \cap B_2(0))} < \delta$), the second term is absorbed into $B_1$, and so we may set $\widetilde H_j =(H_j\circ\Phi_j-H)F$ with
    $$\|\widetilde{H}_j\|_{L^\infty(\mathbf{C} \cap A(0;2\tau_j,1))} \leq C_1\|H_j - H\|_{L^\infty(\Sigma_j \cap B_2(0))}$$ for a constant $C_1 > 0$ (into which we absorb the constant in Remark~\ref{rem: J on cone to J on cmc}). We now set $C_2=\frac{\varepsilon}{2}K^{-n}$ and $C=C_1C_2^{-1}$; this latter $C$ is the constant we will obtain for the statement.
    
    \bigskip

    For each $t \in (2K^3\tau_j,1)$ the rescalings $\hat{w}_j(x) = \frac{w_j(tx)}{t}$ weakly solve \eqref{eqn: weak form with mean curvature difference} on $\mathbf{C} \cap A(0;K^{-3},1)$ where now $\|\widetilde{H}_j\|_{L^\infty(\mathbf{C} \cap A(0;K^{-3},1))} \leq t C_1 \|H_j - H\|_{L^\infty(\Sigma_j \cap B_2(0))}$. Thus, if \eqref{eqn: J monotonicity with mean curvature difference} fails at this $t$ then we have
    \begin{align*}
        \|\widetilde H_j\|_{L^\infty(\mathbf C\cap A(0;K^{-3},1))}&\leq tC_1\|H_j - H\|_{L^\infty(\Sigma_j \cap B_2(0))}
        \leq tC_2 J^\gamma_{K;\mathbf C}( w_j;K^{-1}t )\\
        &\le C_2 t^{\gamma-1} J^\gamma_{K;\mathbf C}(w_j;K^{-1}t) \leq C_2 J^\gamma_{K;\mathbf C}(\hat w_j;K^{-1})\leq \frac{\varepsilon}{2}\|\hat w_j\|_{L^2(A(0;K^{-3},1))},
    \end{align*} 
    using Remark~\ref{rem: J on cone to J on cmc}, $t\le1$, $\gamma<0$, and $|x|^{-n-2\gamma}\le K^{2n}$ on $A(0;K^{-2},K^{-1})$. Absorbing $\widetilde H_j$ into $B_1$ we see that $\hat w_j$ satisfies \cite[(33)–(34)]{LW25} and so iterating \cite[Corollary 6.2]{LW25} gives \cite[(F.15)]{LW25} with the $\kappa\|g^1-g^0\|_{C^4}$ term appearing there replaced by $C\|H_j - H\|_{L^\infty(\Sigma_j \cap B_2(0))}$.

    \bigskip
    
    Then, if \eqref{eqn: J monotonicity with mean curvature difference} fails for infinitely many $j \geq 1$ the same reasoning following \cite[(F.15)]{LW25} shows that $\tau_j > 0$ for sufficiently large $j \geq 1$ and hence $\tau_j^{-1}|\Sigma_j|$ converges to a stationary integral varifold $V_\infty$ which differs from $\mathbf{C}$ but is asymptotic to it at infinity with $\mathcal{AR}_\infty(V_\infty) < \gamma$. However, this conclusion is precluded by the choice of $\gamma \in (-\frac{n-2}{2}, \gamma_1^+(\mathbf{C}))$ since the assumption that $\mathbf{C}$ is strictly minimising ensures $\mathcal{AR}_\infty(V_\infty) \geq \gamma_1^+$ by \cite[
    Theorem 4.5(1)]{W20}. Thus the desired statement holds. 
\end{proof}

\begin{remark}
    We note that the above lemma differs from the dichotomy stated in \cite[Appendix A]{MNP25} in the choice of $\gamma$, the fact that we do not consider changing metrics, and that we only work with the $C^1_*$ norm. This is permitted since the weak formulations used in the proof require only $W^{1,2}_\mathrm{loc}$ functions. 
\end{remark}

\subsection{Strict stability to volume-constrained minimisation}\label{subsec: strict stability to VCM}

Throughout this subsection we will assume $\Sigma = \partial \Omega$ is a connected, separating $H$-CMC hypersurface with finitely many isolated singularities modelled on strictly stable, strictly minimising regular tangent cones of multiplicity one; the results of \cite{S83B} show that such tangent cones are unique. Under the assumption of strict stability we will show that $\Omega$ is $\mathcal{F}_H$-minimising in a tubular neighbourhood of $\Sigma$. Moreover, under the assumption of weak strict stability we show $\Omega$ is volume-constrained minimising in a tubular neighbourhood of $\Sigma$. Combining both statements yields Theorem~\ref{thm: strict stability to VCM}.

\bigskip

We first construct a sequence of barriers for $\Sigma$, which do not require any global stability assumptions. More precisely, for each $j \geq 1$ we construct nested sets $A_j \subset \Omega \subset B_j$ converging to $\Omega$ in the $L^1$ sense as $j \rightarrow \infty$. Moreover, denoting $S_j = B_j \setminus A_j$, we show that each $\partial S_j$ (disjoint from $\Sigma$) is a pair of smooth hypersurfaces converging smoothly as graphs to $\Sigma$ away from each singular point as $j \rightarrow \infty$. 

\bigskip

As shown in \cite[Theorem 7.3]{L23} (c.f.~\cite[Theorem 4.1]{HS85}), the assumption of strict stability and strict minimisation of the regular tangent cone at each singular point ensures that $\Sigma$ is in fact locally $\mathcal{F}_H$-minimising around each singular point. Fix $r>0$ sufficiently small so that the balls $B_{8r}^g(p)$ are pairwise disjoint for each $p \in \mathrm{Sing}(\Sigma)$,  so that normal coordinates over $\Sigma\setminus\{\rho<r\}$ are defined, and sufficiently small for the application of \cite[Theorem 8.7]{L23} that follows below.

\bigskip

By \cite[Theorem 8.7]{L23} (which records the adaptation of \cite[Theorem 2]{BL25} to the Riemannian setting), for a sequence $t_j \rightarrow 0$ as $j \rightarrow \infty$ there are $\mathcal{F}_H$-minimisers $R_p^{\pm t_j}$ in $B_{8r}^g(p)$ with boundaries  equal to the $\pm t_j$-normal graphs over $\Sigma\cap\partial B_{8r}^g(p)$. We set $S_p^{\pm t_j} = \partial(R_p^{\pm t_j} )\mres B_{8r}^g(p)$, so that each $S_p^{\pm t_j}$ is an entirely smooth $H$-CMC hypersurface lying on one side of $\Sigma$ (we take the convention that $R^{-t_j}_p \subset \Omega \cap B_{8r}^g(p) \subset R^{+t_j}_p$). Moreover, we have that $S_p^{\pm t_j}\rightarrow \Sigma\cap B_{8r}^g(p)$ in Hausdorff distance as $t_j \rightarrow 0$, and thus on the annulus $r\leq \rho \leq 7r$ we can write each hypersurface as a smooth normal graph $w_p^{\pm t_j}$ over $\Sigma$ by Allard's theorem (e.g.~as stated in \cite[Chapter 5]{S83A}) whenever $t_j$ is sufficiently small. By the Harnack inequality (which we can apply since both $\Sigma$ and $S_p^{\pm t_j}$ are $H$-CMC) it then follows that for some $c > 0$ we have both
    $$
          c^{-1}|t_j|\le |w_p^{\pm t_j}|\le c|t_j| \quad \text{ and } \quad
            \max_{2r \leq \rho \leq 6r}|\nabla^k w_p^{\pm t_j}|
            \le c|t_j| \quad \text{ for }
            \quad k=1,2 .
    $$
    
Also by the choice of $r > 0$ above, for $|t|$ small we can define the smooth parallel hypersurfaces
    $$
            C_t
            =
            \{\exp_x(t\nu(x)) \, | \, x\in \Sigma,\ \rho(x)\ge r\},
    $$
where $\nu(x)$ is the outward-pointing unit normal to $\Sigma$ at a point $x \in \mathrm{Reg}(\Sigma)$. Since $\Sigma$ is an $H$-CMC hypersurface we compute directly that the mean curvature $H_{C_t}$ of the smooth hypersurfaces $C_t$ satisfies
    $$
            |H_{C_t}-H|\le C|t|
            \quad\text{on }\{\rho\ge r\},
    $$
for some constant $C > 0$ depending on $r$ and $\Sigma$.

\bigskip
    
We now glue the above hypersurfaces to form barriers. For this, we choose a smooth increasing cutoff function $\eta: [0,\infty) \rightarrow \mathbb{R}$, with $\eta = 0$
 on $[0,3r]$ and $\eta = 1$ on $[5r,\infty)$, and on each annulus $\{2r \leq \rho \leq 6r\}$ around $p \in \mathrm{Sing}(\Sigma)$ we set
    $$
            u_p^{\pm t_j}
            =
            (1-\eta)w_p^{\pm t_j} \pm \eta t_j;
    $$
    thus the graph of $u_p^{\pm t_j}$ interpolates between $C_{\pm t_j}$ and $S_p^{\pm t_j}$. We denote the resulting hypersurfaces by $\Gamma_j^{\pm}$ so that $S_j$ is defined implicitly to be the region with $\partial S_j = \Gamma_j^+ \cup \Gamma_j^-$ containing $\Sigma$. More concretely, we define
    $$
    \Gamma_j^{\pm} = \bigcup_{p \in \mathrm{Sing}(\Sigma)}(S_p^{\pm t_j} \cap B_{3r}^g(p)) \cup \mathrm{graph}(u_p^{\pm t_j}) \cup (C_{\pm t_j} \cap (M \setminus B_{5r}^g(p)))
    $$
    which are then smooth hypersurfaces. 
    
    \bigskip
    
    Finally, for each $j \geq 1$ we write $M = A_j \cup S_j \cup B_j^c$ so that $S_j = B_j \setminus A_j$ is chosen with outward-pointing unit normal to $\Gamma_j^+$ pointing into $B_j^c$; by construction we see that then for each $j \geq 1$ we have $A_j \subsetneq \Omega \subsetneq B_j$ with $|A_j \Delta \Omega|, |B_j \Delta \Omega|, |A_j \Delta B_j| \rightarrow 0$ as $j \rightarrow \infty$. Note that $\partial S_j$ then has two connected components $\Gamma^\pm_j$ with $\partial A_j = \Gamma_j^-$, $\partial B_j = \Gamma_j^+$, and $\partial S_j = \partial A_j \cup \partial B_j$ for each $j \geq 1$.

    \bigskip

    We will also make use of the following Taylor expansions in the proof of Theorem~\ref{thm: strict stability to VCM}:
    
\begin{remark}\label{rem: Taylor expansion}(Taylor expansions)
    Let $\Sigma=\partial \Omega$ be an $H$-CMC hypersurface with finitely many isolated singularities with strictly stable, strictly minimising regular tangent cones of multiplicity one as above. For $\psi\in C^1_*(\Sigma)$ denote by $\Omega_{\psi}$ the region obtained from $\Omega$ with boundary given by
    $$
      \Sigma_{\psi}= \left\{
        \exp_x^g\left(\psi(x)\nu(x)\right)
        \, | \,
        x\in\Sigma
        \right\}.
    $$
    We consider the Taylor expansions of the perimeter and volume. Following the computations in \cite[Appendix B]{LW25}, we see that there exist constants $\varepsilon_0,C>0$, depending on $\Sigma$ and $(M,g)$ such that for $\|\psi\|_{C^1_*(\Sigma)}\le \varepsilon_0$ we have
    $$
    \mathrm{Per}_g(\Omega_{\psi})
    -\mathrm{Per}_g(\Omega)
    =
    H\int_\Sigma\psi\,dA_g + 
    \frac{1}{2}\int_\Sigma
    \left(
    |\nabla_\Sigma\psi|^2-(|\sff_\Sigma|^2+\mathrm{Ric}_g(\nu,\nu)-H^2) \psi^2
    \right)\,dA_g +R_P(\psi),
    $$
    and 
    $$
    \mathrm{Vol}_g(\Omega_{\psi}) - \mathrm{Vol}_g(\Omega)
    =
    \int_\Sigma\psi\,dA_g
    +\frac{H}{2}\int_\Sigma\psi^2\,dA_g
    +R_V(\psi).
    $$
    
    where
    $$
        |R_P(\psi)|+|R_V(\psi)|
    \leq
    C\|\psi\|_{C^1_*(\Sigma)}\cdot
    \|\psi\|_{W^{1,2,-2}(\Sigma)}^2.
    $$
    Hence, writing $Q_L(\psi,\psi) = \int_\Sigma
    \left(
    |\nabla_\Sigma\psi|^2-(|\sff_\Sigma|^2+\mathrm{Ric}_g(\nu,\nu))\psi^2
    \right)\,dA_g$, we have

    \begin{equation}\label{eqn: Taylor expansion of F_H}
        \mathcal F_H(\Omega_{\psi})-\mathcal F_H(\Omega)
    =
    \frac{1}{2}Q_L(\psi,\psi)
    +R_F(\psi),
    \end{equation}
    where
    \begin{equation}\label{eqn: Taylor expansion error}
      | R_F(\psi)| = |R_P(\psi)-H R_V(\psi)| \leq
    C\|\psi\|_{C^1_*(\Sigma)}\cdot
    \|\psi\|_{W^{1,2,-2}(\Sigma)}^2.
    \end{equation}
    \end{remark}

\medskip

We now establish the first statement in Theorem~\ref{thm: strict stability to VCM}, which follows from:

\begin{theorem}\label{thm: F_H strict stability to VCM} If $\Sigma$ is strictly stable, then $\Omega$ is uniquely locally $\mathcal{F}_H$-minimising in a tubular neighbourhood of $\Sigma$. In particular, $\Omega$ is volume-constrained minimising in this tubular neighbourhood.
\end{theorem}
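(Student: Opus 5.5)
We argue by contradiction, following the strategy of \cite{W94} with the barriers $S_j$ constructed above. For each $j \geq 1$, minimise $\mathcal{F}_H$ among $E \in \mathcal{C}(M)$ with $A_j \subset E \subset B_j$, that is, with $E \Delta \Omega \subset S_j$. The direct method gives a minimiser $\Omega_j$. Since $\Gamma_j^\pm$ are smooth with mean curvature close to $H$, they act as barriers, in the sense that an $\mathcal{F}_{H}$-minimiser touching them can be compared against them. Hence $\Sigma_j = \partial\Omega_j$ is, away from $\partial S_j$, an $H$-CMC boundary that is $\mathcal{F}_H$-minimising in the interior of $S_j$, and near $\partial S_j$ it is a $C^{1,\alpha}\cap W^{2,2}$ obstacle-problem boundary. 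In either case $\Sigma_j$ has distributional mean curvature $H_j$ with $\|H_j - H\|_{L^\infty} \leq C t_j$, since $|H_{C_t} - H| \leq C|t|$ and the $S_p^{\pm t_j}$ are exactly $H$-CMC. Because $|A_j \Delta B_j| \to 0$, we have $\Omega_j \to \Omega$ in $L^1$, and the minimising property gives varifold convergence $\mathbf{F}(|\Sigma_j|,|\Sigma|) \to 0$. Allard's theorem then yields $C^{1,\alpha}$ graphical convergence $\Sigma_j = \mathrm{graph}_\Sigma(v_j)$ on compact subsets of $\Sigma \setminus \mathrm{Sing}(\Sigma)$, with $|v_j| \leq C t_j$ there. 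It suffices to show $\mathcal{F}_H(\Omega_j) \geq \mathcal{F}_H(\Omega)$, with equality only if $\Omega_j = \Omega$, for $j$ large. This gives unique $\mathcal{F}_H$-minimisation in $S_j$. Volume-constrained minimisation follows immediately, since competitors of equal volume differ in $\mathcal{F}_H$ exactly as in perimeter.

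The key step is to control $\Sigma_j$ near each $p \in \mathrm{Sing}(\Sigma)$ via Lemma~\ref{lem: dichotomy adapted}. Fix $\gamma \in (-\frac{n-2}{2}, \gamma_1^+(\mathbf{C}_p))$ away from $\Gamma(\mathbf{C}_p)$ and close to $\gamma_1^+$, so that \eqref{eqn: super-quadratic exponent} gives $\frac{n}{1-\gamma} > 2$. After rescaling a small ball around $p$ so that the hypotheses hold, we iterate \eqref{eqn: J monotonicity with mean curvature difference} from $t \sim 1$ down to $t \sim 2K^3\tau_j$. This gives
\[
J^\gamma_{K;\Sigma,g}(v_j;t) \leq C\bigl(J^\gamma_{K;\Sigma,g}(v_j;1) + t_j\bigr) \leq C t_j
\]
for all such $t$. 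Unwinding the weight, this is a pointwise $L^2$ bound of the form $|v_j| \lesssim t_j \rho^{\gamma}$ on annuli, upgraded by elliptic estimates to $\|v_j\|_{C^1_*} \lesssim t_j\rho^{\gamma-1}$. Since the graph breaks down at scale $\tau_j$, where the $C^1_*$ norm reaches $\delta$, we obtain the height-to-graphing-radius bound $\delta \lesssim t_j \tau_j^{\gamma-1}$, that is,
\[
\tau_j \leq C t_j^{1/(1-\gamma)}.
\]
The barriers also confine $\Sigma_j \cap B_{\tau_j}(p)$, as does the local $\mathcal{F}_H$-minimality of $\Sigma$ near $p$ (\cite[Theorem 7.3]{L23}). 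Together with monotonicity these bound the $\mathcal{F}_H$ contribution of the excised balls. The error from cutting off inside $B_{C\tau_j}(p)$ is then at most $C\tau_j^n \leq C t_j^{n/(1-\gamma)} = o(t_j^2)$, which is where the super-quadratic exponent is essential.

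Finally, take a cutoff $\chi_j$ vanishing on $B_{C\tau_j}(\mathrm{Sing}(\Sigma))$ and set $\psi_j = \chi_j v_j$. Compare $\mathcal{F}_H(\Omega_j)$ with $\mathcal{F}_H(\Omega_{\psi_j})$, which differ only by $o(t_j^2)$-type errors controlled as above, or more precisely by errors $o(\|\psi_j\|^2_{W^{1,2,-2}})$ after comparing with the lower bound $\|v_j\|_{L^2} \gtrsim t_j$ that holds unless $v_j$ is much smaller. Now apply \eqref{eqn: Taylor expansion of F_H} and strict stability \eqref{eqn: strict stability inequality} in its $W^{1,2,-2}$ form. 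Since $\|\psi_j\|_{C^1_*} \to 0$, this gives
\[
\mathcal{F}_H(\Omega_{\psi_j}) - \mathcal{F}_H(\Omega) \geq \Bigl(\tfrac{\lambda}{2} - C\|\psi_j\|_{C^1_*}\Bigr)\|\psi_j\|^2_{W^{1,2,-2}}.
\]
The main obstacle is to make the excision error genuinely small relative to $\|\psi_j\|^2_{W^{1,2,-2}}$ rather than merely $o(t_j^2)$. The first thing to try is to normalise by $\varepsilon_j = \|v_j\|_{L^2(\Sigma\setminus B_r)} + \|H_j - H\|_{L^\infty}$ in place of $t_j$, and to rerun the growth estimate with this quantity. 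If $\varepsilon_j \lesssim \|\psi_j\|_{W^{1,2,-2}}$, then the error is at most $\varepsilon_j^{n/(1-\gamma)} \ll \|\psi_j\|^2_{W^{1,2,-2}}$ and can be absorbed. This forces $\mathcal{F}_H(\Omega_j) > \mathcal{F}_H(\Omega)$ unless $\psi_j \equiv 0$, which contradicts minimality of $\Omega_j$ since $\Omega$ is an admissible competitor. Hence $\Omega_j = \Omega$ for all large $j$, giving the uniqueness claim.
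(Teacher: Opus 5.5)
There is a genuine gap. Your architecture matches the paper's: minimise $\mathcal{F}_H$ in $S_j$, use Allard away from $\mathrm{Sing}(\Sigma)$, apply Lemma~\ref{lem: dichotomy adapted} near the singular points, cut off at the graphing radius, then use the Taylor expansion with strict stability and absorb the excision error via the super-quadratic exponent. But the decisive step is left as a conditional you never verify, and as you set it up it fails.

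The excision error $C\tau_j^n$ must be small compared with $\|\psi_j\|^2_{W^{1,2,-2}}$, that is, with the actual height of $\Sigma_j$, not the barrier width $t_j$. There is no lower bound $\|v_j\|_{L^2}\gtrsim t_j$ to fall back on: $\Sigma_j$ can be much closer to $\Sigma$ than the barriers, and in the end it equals $\Sigma$. Your normalisation $\varepsilon_j=\|v_j\|_{L^2(\Sigma\setminus B_r)}+\|H_j-H\|_{L^\infty}$ does not close the argument either. The term $\|H_j-H\|_{L^\infty}$ is only known to be $O(t_j)$, and nothing bounds it by $\|\psi_j\|_{W^{1,2,-2}}$.

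The missing observation is that this term vanishes exactly where the lemma is used. In $B^g_{2r}(p)$ the obstacles are the exactly $H$-CMC hypersurfaces $S_p^{\pm t_j}$. By the mean curvature estimates for the obstacle problem (\cite[Theorem 1.1]{V10}), $\Sigma_j$ is therefore exactly $H$-CMC there, and \eqref{eqn: J monotonicity with mean curvature difference} holds with no $\|H_j-H\|$ term. The graphing radius is then controlled by $a_j=\|v_j\|_{L^2(\Sigma\setminus B^g_{K^{-2}}(\mathrm{Sing}(\Sigma)))}$ alone. Also $a_j\le\|\widetilde v_j\|_{L^2(\Sigma)}$ holds trivially, because the cutoff acts only inside $B^g_{2s_j}(\mathrm{Sing}(\Sigma))$. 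Together these give $s_j^n\le\|\widetilde v_j\|_{L^2(\Sigma)}^{2+\beta}$, which is absorbed. That $H_j$ may differ from $H$ elsewhere is harmless, since away from the singular set only the graphical Taylor expansion is used, and it needs no equation for $\Sigma_j$.

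Second, your derivation of the height-to-graphing-radius bound by elliptic estimates does not reach the scale where the graph breaks. Lemma~\ref{lem: dichotomy adapted} controls $J$ only for $t>2K^3\tau_j$. So the bound $\|v_j\|_{C^1_*}\lesssim t_j\rho^{\gamma-1}$ you obtain holds only for $\rho\gtrsim K^2\tau_j$, and it says nothing about why $\|v_j\|_{C^1_*}$ reaches $\delta$ at $\tau_j$. Smallness of $v_j$ outside $B_{CK^2\tau_j}(p)$ does not by itself let the graph continue below $\tau_j$, so ``$\delta\lesssim t_j\tau_j^{\gamma-1}$'' does not follow.

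The paper obtains the needed lower bound by compactness. Suppose $a_j\le s_j^{1-\gamma}$ with $\gamma<\gamma_p$. Then the rescalings $\hat v_j=v_j(s_j\,\cdot)/s_j$ satisfy $J^{\gamma_p}_{K;\mathbf C_p}(\hat v_j;\tfrac{1}{Ks_j})\le Cs_j^{\gamma_p-\gamma}\to0$, so $J^{\gamma_p}_{K;\mathbf C_p}(\hat v_j;K^3)\to0$ by monotonicity. On the other hand, $\hat v_j\to h$, where $h$ is the graph of a stationary varifold asymptotic to $\mathbf C_p$ but, by the minimality of $s_j$, not equal to it. Hence $J^{\gamma_p}_{K;\mathbf C_p}(h;K^3)>0$, a contradiction. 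The separation $\gamma<\gamma_p$ supplies exactly the slack that lets this argument run without any uniform lower bound over the possible limits $h$.
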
 

\begin{proof} 
    We will show that $\Omega$ is uniquely $\mathcal{F}_H$-minimising in the sets $S_j$ constructed above for sufficiently large $j \geq 1$; both statements then follow since $\mathcal{F}_H$-minimisers are volume-constrained minimisers (seen by considering fixed-volume competitors). To this end, for each $j \geq 1$ we let $\Omega_j \in \mathcal{C}(M)$ solve
    $$
        \mathcal{F}_H(\Omega_j) = \inf_{E \in \mathcal{C}(M)}\{\mathcal{F}_H(E) \, | \, E \Delta \Omega \subset S_j\};
    $$
    we will show that $\Omega_j = \Omega$ for $j \geq 1$ sufficiently large. Note first that by regularity results for the isoperimetric problem with respect to an obstacle (e.g.~see \cite[Theorem 1.1]{V10} for a formulation in Riemannian manifolds) and since $\Omega_j$ is volume-constrained minimising (since it is $\mathcal{F}_H$-minimising) in $S_j$ and $\partial S_j$ is a union of smooth hypersurfaces, we have for some $\alpha \in (0,1)$ that $\Sigma_j = \partial \Omega_j$ is a $C^{1,\alpha}\cap W^{2,2}$ hypersurface away from a closed set of Hausdorff dimension at most $n - 7$ with bounded distributional mean curvature. Moreover, since $\Gamma_j^{\pm}$ are smooth $H$-CMC hypersurfaces in the balls $B_{2r}^g(p)$ for each $p \in \mathrm{Sing}(\Sigma)$, the mean curvature estimates in \cite[Theorem 1.1]{V10} ensure that $\Sigma_j$ is a (potentially singular) $H$-CMC hypersurface in each $B_{2r}^g(p)$ for $j \geq 1$ sufficiently large.

    \bigskip

    Since $\Omega_j \Delta \Omega \subset S_j$ for each $j \geq 1$, by their construction we have $A_j \subset \Omega_j \subset B_j$ and thus as both $|A_j \Delta \Omega|, |B_j \Delta \Omega| \rightarrow 0$ we have $|\Omega_j \Delta \Omega| \rightarrow 0$ as $j \rightarrow \infty$ also. Noting that the $\mathcal{F}_H$-minimisation of $\Omega_j$ gives
    $$
            \mathrm{Per}(\Omega_j)-H|\Omega_j|
            \le
            \mathrm{Per}(\Omega)-H|\Omega|,
    $$
    the fact that $|\Omega_j \Delta \Omega| \rightarrow 0$ as $j \rightarrow \infty$ combined with the lower semicontinuity of perimeter gives $ \mathrm{Per}(\Omega_j)\rightarrow\mathrm{Per}(\Omega)$ as $j \rightarrow \infty$. Thus, by \cite[Proposition A.1]{DT13}, we see that $\mathbf{F}(|\Sigma|,|\Sigma_j|) \rightarrow 0$ as $j \rightarrow \infty$ and hence $\Sigma_j \rightarrow \Sigma$ as $C^{1,\alpha}$ graphs away from singular points of $\Sigma$ by Allard's theorem (e.g.~as stated in \cite[Chapter 5]{S83A}) as $j \rightarrow \infty$.

    \bigskip

    For each $p \in \mathrm{Sing}(\Sigma)$ fix some $\gamma_p\in(-\frac{n-2}{2},\gamma_1^+(\mathbf C_p))$ a distance $\sigma > 0$ away from the asymptotic spectrum, and choose $\gamma \in (-\frac{n-2}{2},\min_{p \in \mathrm{Sing}(\Sigma)}(\gamma_p))$. For this choice of $\sigma > 0$ and some $\Lambda > \frac{n-2}{2}$ we then apply Lemma~\ref{lem: dichotomy adapted} to obtain $\delta>0$ for which its conclusions hold. Using conical coordinates for $\Sigma$ around each $p \in \mathrm{Sing}(\Sigma)$ (as defined in Subsection~\ref{subsec: notation}) we can write the graphing radii for $\Sigma_j$ over $\Sigma$ around each $p \in \mathrm{Sing}(\Sigma)$ as
    $$
         s_j = \max_{p \in \mathrm{Sing}(\Sigma)}\inf \{ \tau>0 \, | \, \|v_j\|_{C^1_*(A^g(p;\tau,1))}\le \delta\},
     $$
    this ensures for $j \geq 1$ sufficiently large that
        $$\Sigma_j \setminus B_{s_j}^g(p) = \mathrm{graph}_{\Sigma}^g(v_j) \setminus B_{s_j}^g(p).$$ 
    Moreover, by the strict stability and strict minimisation assumption on $\mathbf{C}_p$ for each $p \in \mathrm{Sing}(\Sigma)$, we have that both (\ref{eqn: super-quadratic exponent}) holds for $\gamma$ and (\ref{eqn: J monotonicity with mean curvature difference}) holds over each $\mathbf{C}_p$. 

    \bigskip
    
    By defining $a_j = \Vert v_j\Vert_{L^2(\Sigma \setminus B_{K^{-2}}^g(\mathrm{Sing}(\Sigma)))}$ we will now show, as in the proof of \cite[Theorem 4]{MNP25}, that 
    \begin{equation}\label{eqn: graphing radius estimate}
    s_j \leq a_j^{\frac{1}{1-\gamma}} = a_j^\frac{2 + \beta}{n}
    \end{equation}
    where $\gamma$ is chosen as above and $\beta > 0$ (the second equality coming from (\ref{eqn: super-quadratic exponent})). Indeed, if this fails then by defining $\hat{v}_j(x) = \frac{v_j(s_j x)}{s_j}$ on $A(0;1,\frac{1}{Ks_j})$ we see that $J^{\gamma_p}_{K;\mathbf{C}_p}\left(\hat{v}_j;\frac{1}{Ks_j}\right) \rightarrow 0$ since, by writing $y = s_jx$, we compute
    $$J^{\gamma_p}_{K;\mathbf{C}_p}\left(\hat{v}_j;\frac{1}{Ks_j}\right)^2 = s_j^{-2+2\gamma_p} \int_{A\left(0;\frac{1}{K^2},\frac{1}{K}\right)} v_j(y)^2|y|^{-n-2\gamma_p} \, d\|\mathbf{C}_p\|(y) \leq K^{2n + 4\gamma_p}s_j^{-2+2\gamma_p} a_j^2,$$
    
    so that under the assumption that $a_j \leq s_j^{1-\gamma}$ we ensure that
    $$J^{\gamma_p}_{K;\mathbf{C}_p}\left(\hat{v}_j;\frac{1}{Ks_j}\right) \leq Cs_j^{(\gamma_p - \gamma)} \rightarrow 0.$$
    
    However, by writing (\ref{eqn: J monotonicity with mean curvature difference}) for $\hat{v}_j$ (with $H_j = H$ for all $j \geq 1$ since the $\Sigma_j$ are $H$-CMC in $B_{2r}^g(p)$ for each $p \in \mathrm{Sing}(\Sigma)$) we deduce that $J^{\gamma_p}_{K;\mathbf{C}_p}(\hat{v}_j;K^l)$ is increasing for integers $3 \leq l \leq \log_K(\frac{1}{s_j}) - 1$ and hence we have
    $$J^{\gamma_p}_{K;\mathbf{C}_p}\left(\hat{v}_j;K^3\right) \leq J^{\gamma_p}_{K;\mathbf{C}_p}\left(\hat{v}_j;\frac{1}{Ks_j}\right).$$
    Since $\hat{v}_j \rightarrow h \in C^2_{\mathrm{loc}}(\mathbf{C}_p \setminus B_1(0))$ in $C^2_\mathrm{loc}$ where $h$ is the graph of a stationary varifold, $V$, which is asymptotic to but not equal to $\mathbf{C}_p$ (obtained as a limit of $s_j^{-1}|\Sigma_j|$ by the minimality of $s_j$),
    for all large $j$ we see that
    $$J^{\gamma_p}_{K;\mathbf{C}_p}\left(\hat{v}_j;K^3\right) > \frac{1}{2}J^{\gamma_p}_{K;\mathbf{C}_p}\left(h;K^3\right)  > 0$$
    
    where the right-hand term is strictly positive since $h \neq 0$ (since $V \neq \mathbf{C}_p$). This is a contradiction since we saw that $J^{\gamma_p}_{K;\mathbf{C}_p}\left(\hat{v}_j;\frac{1}{Ks_j}\right) \rightarrow 0$ as $j \rightarrow \infty$; hence (\ref{eqn: graphing radius estimate}) holds as desired.

    \bigskip

    Next, we modify (in a similar manner to Figure~\ref{fig: gluings}) the sets $\Omega_j$ inside of $B_{2s_j}^g(\mathrm{Sing}(\Sigma))$ in order to construct sets $\widetilde\Omega_j$ agreeing with $\Omega$ in $B_{s_j}^g(\mathrm{Sing}(\Sigma))$. To this end, we choose a smooth increasing cutoff function $\zeta\in C^\infty([0,\infty))$ with $\zeta = 0$ on $[0,1]$ and $\zeta = 1$ on $[2,\infty)$. For $j \geq 1$ and $x\in B_{2s_j}^g(p)\cap \Sigma$ for each $p \in \mathrm{Sing}(\Sigma)$ we define
    \begin{equation}\label{eqn: cutoff graphs}
        \widetilde v_j(x)
        = \zeta\!\left(\frac{\mathrm{dist}_g(x,p)}{s_j}\right)v_j(x)
    \end{equation}
    for $x \in A^g(p;s_j,2s_j)$ while setting $\widetilde v_j = v_j$ on $\Sigma \setminus B_{2s_j}^g(p)$ and $\widetilde v_j = 0$ on $B_{s_j}^g(p)$ for each $p \in \mathrm{Sing}(\Sigma)$. Since for each $j \geq 1$ we have $\|v_j\|_{C^1_*(A^g(p;s_j,1))} \leq \delta$ for each $p \in \mathrm{Sing}(\Sigma)$, we ensure
    $\|\widetilde v_j\|_{C^1_*(A^g(p;s_j,1))} \leq C \delta$ for a constant $C > 0$ depending on $\zeta$. Moreover, we note that $a_j \leq \|\widetilde{v}_j\|_{L^2(\Sigma)}$ for each $j \geq 1$ by construction.
    
    \bigskip
    
    For each $j \geq 1$ we then denote by $\widetilde\Omega_j$ the Caccioppoli set whose
    boundary is given by the global normal graph $\widetilde\Sigma_j = \mathrm{graph}^{g}_{\Sigma}(\widetilde v_j)$, with $\widetilde{\Omega}_j \Delta \Omega_j \subset S_j$. The calculations in Remark~\ref{rem: Taylor expansion} then provide an analogous computation to that of \cite[(1) in the proof of Theorem 2]{W94} and show that
    
    $$\mathcal{F}_H(\widetilde\Omega_j) - \mathcal{F}_H(\Omega) = \frac{1}{2}Q_L(\widetilde{v}_j,\widetilde{v}_j) + R_F(\widetilde{v}_j) \geq \frac{\lambda}{2}\|\widetilde{v}_j\|_{W^{1,2,-2}(\Sigma)}^2 - C\|\widetilde{v}_j\|_{C^1_*(\Sigma)}\|\widetilde{v}_j\|_{W^{1,2,-2}(\Sigma)}^2$$
    where we used \eqref{eqn: strict stability inequality} from the strict stability (with constant $\lambda > 0$) of $\Sigma$ with respect to the $W^{1,2,-2}$ norm (as in Subsection~\ref{subsec: notation}) and \eqref{eqn: Taylor expansion error} to bound $R_F$.

    \bigskip
    
    Noting that $\widetilde\Omega_j$ differs from $\Omega_j$ only in $B_{2s_j}^g(\mathrm{Sing}(\Sigma))$, we see that
    $$|\mathcal{F}_H(\widetilde\Omega_j) - \mathcal{F}_H(\Omega_j)| \leq Cs_j^n,$$

    for a constant $C > 0$ depending on $\Sigma$ and $(M,g)$, and thus for $j \geq 1$ sufficiently large we have 

    $$\mathcal{F}_H(\Omega_j) - \mathcal{F}_H(\Omega) \geq  \frac{\lambda}{2}\|\widetilde{v}_j\|_{W^{1,2,-2}(\Sigma)}^2  - \|\widetilde{v}_j\|_{C^1_*(\Sigma)}\|\widetilde{v}_j\|_{W^{1,2,-2}(\Sigma)}^2 - Cs_j^n.$$

    Combining $a_j \leq \|\widetilde{v}_j\|_{L^2(\Sigma)}$ with \eqref{eqn: super-quadratic exponent} and \eqref{eqn: graphing radius estimate} we see that $s_j^n \leq \|\widetilde{v}_j\|_{L^2(\Sigma)}^{2 + \beta}$ for some $\beta > 0$. Moreover, if we choose $\delta < \frac{\lambda}{8C}$ above so that $\|\widetilde{v}_j\|_{C^1_*(\Sigma)} < \frac{\lambda}{8}$ (noting that Lemma~\ref{lem: dichotomy adapted} holds for all sufficiently small $\delta > 0$) and use the fact that $\|\widetilde{v}_j\|_{L^2(\Sigma)} \rightarrow 0$ we see that for $j \geq 1$ sufficiently large we have
    
    $$\mathcal{F}_H(\Omega_j) - \mathcal{F}_H(\Omega) \geq  \frac{\lambda}{4}\|\widetilde{v}_j\|_{W^{1,2,-2}(\Sigma)}^2.$$

    The above inequality ensures that, if $\Omega_j \neq \Omega$ we must have
    
    $$\mathcal{F}_H(\Omega_j) > \mathcal{F}_H(\Omega)$$

    whenever $j \geq 1$ is sufficiently large. This contradicts the assumption that the $\Omega_j$ were $\mathcal{F}_H$-minimising in $S_j$ unless $\Omega_j = \Omega$. Thus, $\Omega$ is uniquely $\mathcal{F}_H$-minimising in $S_j$. Since each $S_j$ contains a tubular neighbourhood of $\Sigma$ by their construction, we see that the desired statement holds.
\end{proof}

Under the assumption of weak strict stability we also obtain the following statement, which combined with Theorem~\ref{thm: F_H strict stability to VCM} yields the full statement of Theorem~\ref{thm: strict stability to VCM}:

\begin{theorem}\label{thm: weak strictly stable to VCM}
    If $\Sigma$ is weakly strictly stable, then $\Omega$ is uniquely locally volume-constrained minimising in a tubular neighbourhood.
\end{theorem}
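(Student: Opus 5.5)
We follow the proof of Theorem~\ref{thm: F_H strict stability to VCM}, now imposing a volume constraint in the spirit of \cite{G96}. For each $j \geq 1$ we let $\Omega_j \in \mathcal{C}(M)$ minimise perimeter among $E$ with $E \Delta \Omega \subset S_j$ and $\mathrm{Vol}_g(E) = \mathrm{Vol}_g(\Omega)$. The goal is to show $\Omega_j = \Omega$ for all large $j$, which gives unique volume-constrained minimisation in $S_j$ and hence in a tubular neighbourhood.

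\textbf{Regularity and convergence.} By \cite[Theorem 1.1]{V10}, $\Sigma_j = \partial \Omega_j$ is $C^{1,\alpha}\cap W^{2,2}$ away from a closed set of codimension seven. It has bounded distributional mean curvature $H_j$, where $H_j$ is the Lagrange multiplier wherever $\Sigma_j$ does not touch $\partial S_j$. As before, $A_j \subset \Omega_j \subset B_j$ gives $|\Omega_j \Delta \Omega| \to 0$, and comparison with $\Omega$ gives $\mathrm{Per}(\Omega_j) \to \mathrm{Per}(\Omega)$. Then \cite[Proposition A.1]{DT13} and Allard's theorem yield $C^{1,\alpha}$ graphical convergence $v_j \to 0$ away from $\mathrm{Sing}(\Sigma)$.

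\textbf{Mean curvature near the singular points.} We cannot directly claim $H_j = H$ in $B^g_{2r}(p)$. Testing the first variation against a fixed variation supported in the graphical region, we get $H_j \to H$ there, so $\|H_j - H\|_{L^\infty} \to 0$. We then use the full form \eqref{eqn: J monotonicity with mean curvature difference} of Lemma~\ref{lem: dichotomy adapted}. The graphing radius argument goes through provided the error term $C\|H_j-H\|_{L^\infty}$ does not dominate, so we need a bound $|H_j - H| \lesssim a_j$. We obtain it by linearising the mean curvature equation for the graphs $v_j$ over a fixed compact region of $\mathrm{Reg}(\Sigma)$ and pairing with a fixed test function of nonzero integral, which gives $|H_j - H| \leq C\|v_j\|_{C^{1}} \leq C' a_j$ by elliptic estimates. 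Given this, the contradiction argument establishing \eqref{eqn: graphing radius estimate} proceeds as before, yielding $s_j \leq a_j^{\frac{2+\beta}{n}}$. We expect this to be the main obstacle. In particular, the monotonicity must only be iterated down to scales where $J$ still dominates $a_j$; otherwise we conclude directly that $s_j$ is small.

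\textbf{Cut-off and volume correction.} Next we form $\widetilde v_j$ via \eqref{eqn: cutoff graphs}; this changes the volume by $O(s_j^{n+1})$. We restore the constraint by adding $t_j \phi$, where $\phi \in C^\infty_c(\mathrm{Reg}(\Sigma))$ is fixed with $\int_\Sigma \phi = 1$. Solving $\mathrm{Vol}_g(\Omega_{\psi_j}) = \mathrm{Vol}_g(\Omega)$ for $\psi_j = \widetilde v_j + t_j\phi$, the expansion in Remark~\ref{rem: Taylor expansion} gives
\[
\int_\Sigma \psi_j \, dA_g = -\frac{H}{2}\int_\Sigma \psi_j^2 \, dA_g - R_V(\psi_j),
\]
with $|t_j| \lesssim s_j^{n+1} + \|\widetilde v_j\|^2_{W^{1,2,-2}}$.

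\textbf{Expansion of perimeter.} With the volume fixed, we have
\[
\mathrm{Per}(\Omega_{\psi_j}) - \mathrm{Per}(\Omega) = \tfrac12 Q_L(\psi_j,\psi_j) + R_F(\psi_j).
\]
We decompose $\psi_j = \psi_j^0 + m_j$, where $\psi_j^0$ has zero mean and $m_j$ is a multiple of a fixed compactly supported function. Since $|\int \psi_j|$ is quadratically small, $m_j$ is quadratically small, and weak strict stability applied to $\psi_j^0$ gives
\[
Q_L(\psi_j,\psi_j) \geq \lambda \|\psi_j\|^2_{W^{1,2,-2}} - o\big(\|\psi_j\|^2_{W^{1,2,-2}}\big).
\]
Combining this with $|\mathrm{Per}(\Omega_{\psi_j}) - \mathrm{Per}(\Omega_j)| \lesssim s_j^n \lesssim \|\widetilde v_j\|_{L^2}^{2+\beta}$ and $\delta$ small, we get $\mathrm{Per}(\Omega_j) - \mathrm{Per}(\Omega) \geq \frac{\lambda}{4}\|\psi_j\|^2_{W^{1,2,-2}}$. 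This is strictly positive unless $\Omega_j = \Omega$, contradicting minimality. Hence $\Omega_j = \Omega$ for large $j$, which gives the claim.
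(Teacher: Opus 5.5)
Your outline matches the paper everywhere except the Lagrange multiplier bound. The convergence step, the use of Lemma~\ref{lem: dichotomy adapted}, the cut-off, and the splitting into a zero-mean part plus a quadratically small multiple of a fixed function all correspond to the paper; the splitting is the device the paper uses to show strict stability for its modified functional. The gap is the step $|H_j-H|\le C\|v_j\|_{C^1}\le C'a_j$. You impose the volume constraint as a hard constraint inside the obstacle set $S_j$, so $H_j$ is an unknown Lagrange multiplier, visible only on the part of $\Sigma_j$ not touching $\partial S_j$. Linearising $\mathcal{M}_\Sigma(v_j)=H_j$ (with $\mathcal{M}_\Sigma$ the mean curvature operator of graphs over $\Sigma$) on a fixed compact piece of $\mathrm{Reg}(\Sigma)$, or testing with a fixed two-sided variation, presupposes that $\Sigma_j$ stays off $\Gamma_j^\pm$ there. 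Nothing gives this. Away from $\mathrm{Sing}(\Sigma)$, $\Sigma_j$ is trapped in a slab of width $2t_j$, the barriers $C_{\pm t_j}$ only have mean curvature $H+O(t_j)$, and since $H_j$ is not known, no maximum principle keeps $\Sigma_j$ off them. On the contact set the mean curvature is that of the barrier and only one-sided variations are admissible, while the free set may be small and move with $j$. So neither $H_j\to H$ nor a bound linear in $a_j$follows. Even on a genuinely free region, the interior estimate gives $\|v_j\|_{C^1}\lesssim a_j+|H_j-H|$, so your chain is circular as written, though that part can be repaired by absorption.

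The missing idea is the paper's penalisation: minimise $\mathcal{E}_C(E)=\mathcal{F}_H(E)+\frac{C}{2}(\mathrm{Vol}_g(E)-\mathrm{Vol}_g(\Omega))^2$ in $S_j$ instead. Its first variation fixes the mean curvature of the free part of $\Sigma_j$ at $H+C(\mathrm{Vol}_g(\Omega_j)-\mathrm{Vol}_g(\Omega))$. On contact with the $H$-CMC barriers near $\mathrm{Sing}(\Sigma)$ it equals $H$, so $|H_j-H|\le d_j$ there whether or not $\Sigma_j$ touches. The multiplier thus becomes a volume discrepancy, estimated in three pieces: by $\int|v_j|$ on the graphical part; by summing $t^{n+\gamma_p}J^{\gamma_p}_{K;\Sigma,g}(v_j;t)$ over annuli using \eqref{eqn: J monotonicity with mean curvature difference}, with error term $\widetilde{C}d_j$; and by $O(s_j^{n+1})$ in the smallest balls. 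Absorbing the $d_j$ terms yields $d_j\le C_1a_j$. Unique $\mathcal{E}_C$-minimisation then gives unique volume-constrained minimisation, since $\mathcal{E}_C=\mathrm{Per}_g-H\,\mathrm{Vol}_g(\Omega)$ on sets of volume $\mathrm{Vol}_g(\Omega)$.

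A smaller point: your bound $|t_j|\lesssim s_j^{n+1}+\|\widetilde v_j\|^2_{W^{1,2,-2}(\Sigma)}$ does not give $|\mathrm{Per}_g(\Omega_{\psi_j})-\mathrm{Per}_g(\Omega_j)|\lesssim s_j^n$. The bump changes perimeter by about $Ht_j$, which is then comparable to your main term. You need $|t_j|\lesssim s_j^{n+1}$, which holds since $\mathrm{Vol}_g(\Omega_j)=\mathrm{Vol}_g(\Omega)$ already. More simply, drop the bump and compare $\mathcal{F}_H$-values directly, using that $\widetilde v_j$ already has quadratically small mean.
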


\begin{proof}
We emulate the approach in the smooth case carried out in \cite{G96}, showing that weak strict stability of $\Sigma$ implies strict stability for a modified functional (obtained by adding a quadratic term to $\mathcal{F}_H$ that does not affect the first variation). We then show that $\Omega$ uniquely minimises this modified functional, and hence is volume-constrained minimising, in $S_j$ for all sufficiently large $j \geq 1$. Since the approach is similar to that of Theorem~\ref{thm: F_H strict stability to VCM} above, we only outline the necessary modifications.

\bigskip

For $C > 0$ sufficiently large we consider the modified functional
$$\mathcal{E}_C(E) = \mathcal{F}_H(E) + 
\frac{C}{2}\bigl(\mathrm{Vol}_g(E)-\mathrm{Vol}_g(\Omega)\bigr)^2$$
defined on $E \in \mathcal{C}(M)$, and will show the strict stability of $\Sigma$ with respect to $\mathcal{E}_C$ (c.f.~\cite[Lemma 5]{G96}). The compactness argument used in the proof of \cite[Lemma 5]{G96} does not apply directly here, since $W^{1,2,-2}_0(\Sigma)$ does not compactly embed into the space of $\rho^{-2}$-weighted $L^2(\Sigma)$ functions. Instead, write $u=w+(\int_\Sigma u \, dA_g)\,\psi_0$ for a fixed $\psi_0\in C^\infty_c(\mathrm{Reg}(\Sigma))$ with $\int_\Sigma\psi_0 \, dA_g = 1$, so that $\int_\Sigma w \, dA_g = 0$. We then see from the weak strict stability inequality \eqref{eqn: strict stability inequality} applied to $w$, the boundedness of $Q_L$ on $W^{1,2,-2}_0(\Sigma)$, and Young's inequality, that for $C > 0$ sufficiently large we have
$$
\delta^2 \mathcal{E}_C(u) = Q_L(u,u) + C\left(\int_\Sigma u \, dA_g\right)^2  \geq \frac{\lambda}{4}\|u\|_{W^{1,2,-2}(\Sigma)}^2$$
for all $u\in W^{1,2,-2}_0(\Sigma)$. In other words, $\Sigma$ is strictly stable for $\mathcal{E}_C$ with constant $\frac{\lambda}{4}$ when $C > 0$ is sufficiently large.

\bigskip
    
We now show that $\Omega$ is uniquely $\mathcal{E}_C$-minimising, and hence volume-constrained minimising, in the sets $S_j$ constructed above for sufficiently large $j \geq 1$. To this end, for each $j \geq 1$ we let $\Omega_j \in \mathcal{C}(M)$ solve

$$\mathcal{E}_C(\Omega_j) = \inf_{E \in \mathcal{C}(M)} \{ \mathcal{E}_C(E) \, | \, E \Delta \Omega \subset S_j\};$$

we will show that $\Omega_j = \Omega$ for $j \geq 1$ sufficiently large. Just as in the proof of Theorem~\ref{thm: F_H strict stability to VCM} above, by regularity results for the isoperimetric problem with respect to an obstacle (e.g.~see \cite[Theorem 1.1]{V10} for a formulation in Riemannian manifolds) and since $\Omega_j$ is volume-constrained minimising (for its volume) in $S_j$ and $\partial S_j$ is a union of smooth hypersurfaces, we have for some $\alpha \in (0,1)$ that $\Sigma_j = \partial \Omega_j$ is a $C^{1,\alpha}\cap W^{2,2}$ hypersurface away from a closed set of Hausdorff dimension at most $n - 7$ with bounded distributional mean curvature. Exactly as in the proof of Theorem~\ref{thm: F_H strict stability to VCM} above, since $|\Omega_j \Delta \Omega| \rightarrow 0$ as $j \rightarrow \infty$, the lower semicontinuity of the perimeter, \cite[Proposition A.1]{DT13}, and Allard's theorem, we see that $\Sigma_j \rightarrow \Sigma$ as $C^{1,\alpha}$ graphs away from singular points of $\Sigma$ as $j \rightarrow \infty$.

\bigskip

We now apply Lemma~\ref{lem: dichotomy adapted} to the $\Sigma_j$ in conical coordinates for $\Sigma$ near each $p \in \mathrm{Sing}(\Sigma)$. We first note that, since the first variations of $\mathcal{F}_H$ and $\mathcal{E}_C$ differ by a volume difference term, the distributional mean curvature of $\Sigma_j$ satisfies 
$$|H_j - H| = |C (\mathrm{Vol}_g(\Omega_j) - \mathrm{Vol}_g(\Omega))| = d_j$$
wherever it is defined (e.g.~see \cite[Theorem 1.1]{V10}). This means that the mean curvature difference in \eqref{eqn: J monotonicity with mean curvature difference} will not necessarily be zero near $\mathrm{Sing}(\Sigma)$ (unlike in the proof of Theorem~\ref{thm: F_H strict stability to VCM}).
\bigskip

Fix, as in the proof of Theorem~\ref{thm: F_H strict stability to VCM}, $\gamma_p \in (-\frac{n-2}{2},\gamma_1^+(\mathbf{C}_p))$ a distance $\sigma > 0$ from the asymptotic spectrum and $\gamma \in (-\frac{n-2}{2},\min_{p \in \mathrm{Sing}(\Sigma)}\gamma_p)$. For this choice of $\sigma > 0$ and some $\Lambda > \frac{n-2}{2}$ we then apply Lemma~\ref{lem: dichotomy adapted} to obtain $\delta,\widetilde{C} >0$ (writing $C$ for the constant $\widetilde{C}$ there) for which its conclusions hold. Using conical coordinates for $\Sigma$ around each $p \in \mathrm{Sing}(\Sigma)$ (as defined in Subsection~\ref{subsec: notation}) we can write the graphing radii for $\Sigma_j$ over $\Sigma$ around each $p \in \mathrm{Sing}(\Sigma)$ as
    $$
         s_j =  \max_{p \in \mathrm{Sing}(\Sigma)}\inf \{ \tau>0 \, | \, \|v_j\|_{C^1_*(A^g(p;\tau,1))}\le \delta\},
    $$
this ensures for $j \geq 1$ sufficiently large that
    $$\Sigma_j \setminus B_{s_j}^g(p) = \mathrm{graph}_{\Sigma}^g(v_j) \setminus B_{s_j}^g(p).$$ 
Moreover, by the strict stability and strict minimisation assumption on $\mathbf{C}_p$ for each $p \in \mathrm{Sing}(\Sigma)$, we have that both (\ref{eqn: super-quadratic exponent}) holds for $\gamma$ and (\ref{eqn: J monotonicity with mean curvature difference}) holds over each $\mathbf{C}_p$. 

\bigskip

We now aim to show that an analogue of \eqref{eqn: graphing radius estimate} holds (up to a multiplicative constant) where we define $a_j = \Vert v_j\Vert_{L^2(\Sigma \setminus B_{K^{-2}\tilde{r}}^g(\mathrm{Sing}(\Sigma)))}$ for some sufficiently small $0 < \tilde{r} \leq r$ to be chosen. Let us set $b_j = a_j + \widetilde{C}d_j$ for each $j \geq 1$ and note that running the same argument (obtaining a contradiction by applying \eqref{eqn: J monotonicity with mean curvature difference}) as in the proof of Theorem~\ref{thm: F_H strict stability to VCM}, we see that $s_j \leq b_j^{\frac{1}{1-\gamma}}$. It thus remains to show that $d_j \leq C_1 a_j$ since then we have  $s_j^{1-\gamma} \leq b_j = a_j + \widetilde{C}d_j \leq (1 + \widetilde{C}C_1)a_j$,
yielding the desired analogue of \eqref{eqn: graphing radius estimate}.

\bigskip

On $M \setminus B_{\tilde{r}}^g(\mathrm{Sing}(\Sigma))$ we have that $\Sigma_j = \mathrm{graph}_\Sigma^g(v_j)$ and hence we see that 

\begin{equation}\label{eq:bound1}
|(\Omega_j \Delta \Omega) \setminus  B_{\tilde{r}}^g(\mathrm{Sing}(\Sigma))| \leq 2\int_{\Sigma \setminus B_{\tilde{r}}^g(\mathrm{Sing}(\Sigma))} |v_j| \, dA_g \leq C_0 a_j.    
\end{equation}

On each annular region $A^g(p;K^{-1}t,t)$ for $p \in \mathrm{Sing}(\Sigma)$, for $t = K^{-m}\tilde{r}$ for integers $m = 0, \dots, M_t$ where $M_t$ is the largest integer with $K^{-M_t}\tilde{r} \geq K^3s_j$, we see that by Cauchy--Schwarz (and the fact that $n + 2\gamma_p  >0$ by assumption) we have

$$\int_{A^g(p;K^{-1}t,t)} |v_j| \, dA_g \leq \left(\int_{A^g(p;K^{-1}t,t)} \rho^{n + 2\gamma_p} \, dA_g\right)^{\frac{1}{2}} J^{\gamma_p}_{K;\Sigma,g}(v_j;t) \leq Ct^{n + \gamma_p}J^{\gamma_p}_{K;\Sigma,g}(v_j;t).$$
Using \eqref{eqn: J monotonicity with mean curvature difference} repeatedly and summing over annuli, for fixed $j \geq 1$, we see that 
\begin{equation}\label{eq:bound2}
\begin{split}
    |(\Omega_j \Delta \Omega) \cap A^g(p;K^3s_j,\tilde{r})| &\leq 
    C\sum_{m=0}^{M_j}
\int_{\Sigma\cap A^g(p;t_m/K,t_m)}
|v_j|\,dA_g  \leq C\sum_{m=0}^{M_j}
t_m^{n+\gamma_p}J_p(v_j;t_m) \\
& \leq C \max\{ J_p(v_j;\tilde r), \tilde C d_j\} \sum_{m=0}^{M_j}
t_m^{n+\gamma_p}
\leq C_{\tilde{r}}a_j + c\widetilde{C}\tilde{r}^{n + \gamma_p} d_j.
\end{split}
\end{equation}

In $B_{2K^3s_j}^g(\mathrm{Sing}(\Sigma))$ we have that
\begin{equation}
\begin{split}\label{eq:bound3}
    |(\Omega_j \Delta \Omega) \cap  B_{2K^3s_j}^g(\mathrm{Sing}(\Sigma))| &\leq C_2s_j^{n + 1} \leq C_2s_j^{n + \gamma}s_j^{1 - \gamma} \leq C_2 s_j^{n + \gamma}b_j \\
    &= C_2 s_j^{n + \gamma}(a_j + \widetilde{C}d_j) < C_2a_j + C_2\widetilde{C}s_j^{n + \gamma}d_j. 
\end{split}
\end{equation}

With the bounds \eqref{eq:bound1}, \eqref{eq:bound2}, and \eqref{eq:bound3} above, and choosing an initial $0 < \tilde{r} \leq r$ sufficiently small based on the constants, we can absorb the terms involving $d_j$ whenever $j \geq 1$ is sufficiently large (since $s_j \rightarrow 0$ and $n + \gamma > 0$) and obtain the bound $d_j \leq C_1 a_j$ as desired.

\bigskip

With the above estimate in hand, we modify the graphs exactly as in \eqref{eqn: cutoff graphs}, obtaining sets $\widetilde{\Omega}_j$ for each $j \geq 1$ with $\partial \widetilde{\Omega}_j = \widetilde{\Sigma}_j = \mathrm{graph}_\Sigma^g(\widetilde{v}_j)$ and $\widetilde{\Omega}_j \Delta \Omega_j \subset S_j$. We then proceed using Remark~\ref{rem: Taylor expansion} to see that since $\widetilde{\Omega}_j$ differs from $\Omega_j$ only in $B_{2s_j}^g(\mathrm{Sing}(\Sigma))$ we have
$$\mathcal{E}_C(\Omega_j) - \mathcal{E}_C(\Omega) \geq \frac{\lambda}{8}\|\widetilde{v}_j\|_{W^{1,2,-2}(\Sigma)}^2 - \|\widetilde{v}_j\|_{C^1_*(\Sigma)}\|\widetilde{v}_j\|_{W^{1,2,-2}(\Sigma)}^2 - Cs_j^n$$
for some $C > 0$ depending on $\Sigma$ and $(M,g)$. With the estimate $s_j \leq C_1a_j^{\frac{1}{1-\gamma}}$ in hand, since $a_j \leq \|\widetilde{v}_j\|_{L^2(\Sigma)}$ we see that $s_j^n \leq \|\widetilde{v}_j\|_{L^2(\Sigma)}^{2 + \beta}$ for some $\beta > 0$ (using \eqref{eqn: super-quadratic exponent}). Also, choosing $\delta < \frac{\lambda}{32C}$ above and using the fact that $\|\widetilde{v}_j\|_{L^2(\Sigma)} \rightarrow 0$ we see that for $j \geq 1$ sufficiently large we have
$$\mathcal{E}_C(\Omega_j) - \mathcal{E}_C(\Omega) \geq \frac{\lambda}{16}\|\widetilde{v}_j\|_{W^{1,2,-2}(\Sigma)}^2.$$
Just as in the conclusion of the proof of Theorem~\ref{thm: F_H strict stability to VCM}, we thus see that $\Omega_j = \Omega$ for sufficiently large $j \geq 1$ and hence $\Omega$ is uniquely $\mathcal{E}_C$-minimising in $S_j$. In particular, $\Omega$ is volume-constrained minimising in a tubular neighbourhood of $\Sigma$ as desired.
\end{proof}

\begin{remark}
    In \cite{MR10} it was shown that, in ambient dimensions at most seven, weakly strictly stable smooth CMC hypersurfaces are uniquely homologically area minimising for fixed volumes in $L^1$-neighbourhoods. The dimensional restriction stems from their reliance on \cite[Proposition 1]{MR10}. Though we do not pursue it here, it would be interesting to know if our Theorem~\ref{thm: weak strictly stable to VCM} could be utilised to extend their result to allow for isolated singularities of the type considered here.
\end{remark}

\section{Global behaviour}\label{sec: global behaviour}

In this section we construct CMC hypersurfaces with prescribed singular sets by exploiting Theorem~\ref{thm: CMC CHS} in a similar manner to \cite[Theorem 3]{MN26} and \cite[Theorems 1 \& 2]{S00}; in particular, we remove the assumption that the closed manifolds comprising the singular sets there need be oriented. We then use Theorem~\ref{thm: strict stability to VCM} and an adaptation of the arguments in \cite[Corollary 2]{MN26} to construct, under the assumption that all tangent cones are regular, strictly stable and strictly minimising, examples of singular isoperimetric boundaries with non-zeero CMC, thus establishing Theorem~\ref{thm: CMC singular isop}.

\subsection{Constructions over singular minimal hypersurfaces}\label{subsec: constructions}

In order to find strictly stable CMC hypersurfaces with isolated singularities in any closed manifold of dimension at least eight, we use Theorem~\ref{thm: CMC CHS} to modify the constructions of \cite[Theorem 3]{MN26} and \cite[Theorem 1]{S00}:

\begin{proposition}\label{prop: construction in prescribed ambient}
    Suppose that $M$ is a closed manifold of dimension $n + 1 \geq 8$, $L \geq 1$ is an integer, and $\{\mathbf{C}_i\}_{i = 1}^L$ is a collection of strictly stable, non-planar regular minimal hypercones $\mathbf{C}_i \subset \mathbb{R}^{n+1}$. Then, for each $H \neq 0$ sufficiently small, there is a Riemannian metric $g$ on $M$ and an open set $\Omega \subset M$ with $\Sigma = \partial \Omega$ a strictly stable $H$-CMC hypersurface with $L$ isolated singularities. Moreover, the unique tangent cone at each isolated singularity $p_i \in \mathrm{Sing}(\Sigma)$  for $1 \leq i \leq L$ is $\mathbf{C}_i$.
\end{proposition}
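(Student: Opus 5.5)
We follow the scheme of the Strategy subsection and Figure~\ref{fig: gluings}, modelled on \cite[Theorem 3]{MN26} and \cite[Theorem 1]{S00}.

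\textbf{Minimal starting hypersurface.} First, invoke \cite[Theorem 3]{MN26}, which builds on \cite{S99} and \cite{S00}. This gives a metric $g_0$ on $M$ and a connected, separating, strictly stable minimal hypersurface $\Sigma_0 = \partial \Omega_0$. It has exactly $L$ isolated singularities $p_1,\dots,p_L$, with tangent cone $\mathbf{C}_i$ at $p_i$. We arrange that $g_0$ is Euclidean in normal coordinates on small balls $B_{2r}(p_i)$, and that $\Sigma_0 \cap B_{2r}(p_i) = \mathbf{C}_i \cap B_{2r}(0)$ there. This is how the construction of \cite{S00} proceeds, after first removing any orientation requirement by working with separating hypersurfaces.

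\textbf{Local and global CMC pieces.} Near each $p_i$, apply Theorem~\ref{thm: CMC CHS} with constant data $f \equiv H$. This is valid for $|H| \le \varepsilon$ after rescaling the unit ball to $B_{2r}$. It yields $u_i$ with $\mathcal{M}_{\mathbf{C}_i} u_i = H$, decaying like $r^{2-\delta}$, and with $\mathrm{graph}(u_i)$ strictly stable. Away from the singularities, use strict stability of $\Sigma_0$ in the weighted sense. The Jacobi operator $L_{\Sigma_0}$ is then invertible between weighted Hölder spaces on $\mathrm{Reg}(\Sigma_0)$, for weights chosen in the gap given by strict stability of the cones. An implicit function argument then produces a normal graph $w_H$ over $\Sigma_0 \setminus B_r(\mathrm{Sing})$ with $|w_H| \lesssim |H|$ whose mean curvature is $H$ up to an error that is supported near the gluing region.

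\textbf{Gluing and conformal correction.} Glue $u_i$ and $w_H$ using a cutoff in the annuli $A(p_i; r, 2r)$. Both are $O(|H|)$ in $C^{2,\alpha}$ there, so the resulting separating hypersurface $\Sigma = \partial\Omega$ has mean curvature $H + e$, where $e$ is supported in the annuli and satisfies $|e|_{C^{0,\alpha}} = O(|H|)$. Then perform a conformal change $g = e^{2\phi} g_0$ with $\phi$ supported near the annuli and away from the singular points. Under this change the mean curvature transforms as
\[
H_g = e^{-\phi}\bigl(H_{g_0} + n\,\partial_\nu \phi\bigr).
\]
Choose $\phi$ to vanish on $\Sigma$ and to have normal derivative $\partial_\nu\phi = -e/n$. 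Then $H_g \equiv H$ on $\Sigma$, and $g$ is unchanged near the $p_i$ and far from the annuli. In particular, the tangent cones remain $\mathbf{C}_i$ and the metric near $p_i$ stays Euclidean.

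\textbf{Strict stability, the main obstacle.} The delicate step is proving that $\Sigma$ is strictly stable in the sense of \eqref{eqn: strict stability inequality}. Near the $p_i$, strict stability of $\mathrm{graph}(u_i)$ in $B_1$ is given by Theorem~\ref{thm: CMC CHS}. Away from them, $\Sigma$ is a $C^2$-small perturbation of $\Sigma_0$, and $g$ is $C^2$-close to $g_0$ as $H \to 0$, since $\phi = O(|H|)$ in $C^2$.

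To combine these, first try a perturbation argument. Near the singularities, $\Sigma$ is a graph over $\Sigma_0$ (via the cone) with $\|\cdot\|_{C^2_*}$-norm $O(|H|)$. Hence the coefficients of the second variation satisfy
\[
\bigl||\sff_\Sigma|^2 + \mathrm{Ric}_g(\nu,\nu) - (|\sff_{\Sigma_0}|^2 + \mathrm{Ric}_{g_0}(\nu,\nu))\bigr| \le C|H|\rho^{-2},
\]
with the area element and $\rho$ comparable. So the quadratic form $\delta^2\mathrm{Area}_\Sigma$ differs from that of $\Sigma_0$ by at most $C|H|\int u^2\rho^{-2}$. This error is absorbed by $\lambda_0 \int u^2 \rho^{-2}$ from strict stability of $\Sigma_0$ once $|H|$ is small.

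If the scale-invariant closeness near $p_i$ fails, the fallback is a cutoff argument: split $u = \chi u + (1-\chi)u$, apply the two local strict stability inequalities separately, and control the cross terms using $|\nabla\chi|^2 \lesssim \rho^{-2}$ on the annulus, where $\rho$ is bounded below.
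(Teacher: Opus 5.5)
Your proposal follows the paper's proof essentially step for step. You start from the strictly stable minimal $\Sigma_0=\partial\Omega_0$ of \cite[Theorem 3]{MN26} and \cite{S00}. You apply Theorem~\ref{thm: CMC CHS} with $f\equiv H$ near each $p_i$, and take a CMC graph over $\Sigma_0$ away from the singular points obtained by inverting the Jacobi operator. You then glue with a cutoff in annuli and make a conformal change supported near those annuli. Your explicit conformal factor ($\phi|_\Sigma=0$, $\partial_\nu\phi=-e/n$) and your global perturbation argument for strict stability are a more explicit version of the paper's one-line justification that the changes are small. That argument uses that the graphs of the $u_i$ are $O(|H|)$-close to $\mathbf{C}_i$ in $C^2_*$, so the second variation forms differ by at most $C|H|\Vert u\Vert^2_{W^{1,2,-2}}$, which is absorbed by the strict stability of $\Sigma_0$.

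One step needs correcting: the implicit function argument in weighted H\"older spaces on all of $\mathrm{Reg}(\Sigma_0)$ does not work as stated. For non-planar cones $\mu_1<0$, so $\gamma_1^+<0$ and every weight in the gap $(\gamma_1^-,\gamma_1^+)$ is negative. Functions in those spaces may blow up at the $p_i$, are not $C^1_*$-small there, and their normal graphs, hence the nonlinear mean curvature operator, are not defined near the singular points. Since you only use $w_H$ on $\Sigma_0\setminus B_r(\mathrm{Sing}(\Sigma_0))$, do what the paper does instead. Solve $\mathcal{M}_{\Sigma_0}w=H$ on that truncated domain with zero Dirichlet data. The Dirichlet Jacobi operator there is invertible by strict stability, because test functions extended by zero lie in $W^{1,2,-2}_0(\Sigma_0)$, and $\rho$ is bounded below on the domain. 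Your fallback cutoff argument is unnecessary. As written it would also not close: the localisation error $\int|\nabla\chi|^2u^2$ is only absorbable into $\lambda\int u^2\rho^{-2}$ if the cutoff is logarithmic, not merely because $\rho$ is bounded below on the annulus.
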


\begin{proof}
    Using the construction in the proofs of \cite[Theorem 3]{MN26} (taking each $\Gamma_i$ there as a point) and \cite[Theorem 1]{S00} (noting that one does not require orientability for $M$), and utilising \cite[Proposition 2]{S00}, we find an open set $\Omega_0 \subset M$ and a separating hypersurface $\Sigma_0 = \partial \Omega_0$ which is minimal and strictly stable with respect to some Riemannian metric $g_0$ on $M$. Moreover, $\mathrm{Sing}(\Sigma_0)$ consists of $L$ isolated points with unique tangent cones given by the $\{\mathbf{C}_i\}_{i = 1}^L$, and $g_0$ is Euclidean in $B_1^{g_0}(\mathrm{Sing}(\Sigma_0))$ with $\Sigma_0 \cap B_1(p_i) = \mathbf{C}_i \cap B_2(p_i)$ for each $1 \leq i \leq L$. We note that we do not require the strict minimisation of the $\{\mathbf{C}_i\}_{i = 1}^L$ here since we make no claim about the local area-minimisation of $\Sigma_0$.
    
    \bigskip
    
    By the strict stability of $\Sigma_0$, for all $H \neq 0$ sufficiently small we can invert the Jacobi operator of $\Sigma_0$ away from $B_{1/2}^{g_0}(\mathrm{Sing}(\Sigma_0))$ to find $H$-CMC hypersurfaces with boundary (with zero Dirichlet boundary data on $\Sigma_0 \cap \partial B_{1/2}^{g_0}(\mathrm{Sing}(\Sigma_0))$) as graphs $v_H$ over $\Sigma_0 \setminus B_{1/2}^{g_0}(\mathrm{Sing}(\Sigma_0))$ (c.f.~\cite[Remark 4]{MN26}). 
    
    \bigskip
    
    Applying Theorem~\ref{thm: CMC CHS} with $f = H$ over each $\mathbf{C}_i$ we produce $u_i \in C^{2,\alpha}(\mathbf{C}_i \cap B_1(p_i) \setminus \{p_i\})$ with $\mathrm{graph}_{\mathbf{C}_i}(u_i)$ an embedded strictly stable $H$-CMC hypersurface in $B_1(p_i)$. For each $H \in \mathbb{R} \setminus \{0\}$ sufficiently small as above we construct a hypersurface which agrees with the graphs of the $u_i$ near $\mathrm{Sing}(\Sigma_0)$ and after conformal metric perturbation will be shown to be a strictly stable $H$-CMC hypersurface. 

    \bigskip

    We choose a smooth increasing cutoff function $\eta: [0,\infty) \rightarrow \mathbb{R}$ with $\eta = 0$ on $[0,5/8]$ and $\eta = 1$ on $[7/8,\infty)$, and on $B_1^{g_0}(\mathrm{Sing}(\Sigma_0))$ we set
    
    $$w_i = (1-\eta)u_i + \eta v_H,$$

    so that the graph of $w_i$ interpolates between the graphs of $u_i$ and $v_H$. We denote the resulting hypersurface by $\Sigma$, which is $H$-CMC away from $\cup_{p_i \in \mathrm{Sing}(\Sigma_0)}A^{g_0}(p_i; 5/8,7/8)$, and the open set it bounds by $\Omega$. This is the construction depicted in Figure~\ref{fig: gluings}.

    \bigskip
    
    By performing a conformal metric perturbation supported near the annuli $A^{g_0}(p_i;5/8,7/8)$ (c.f.~\cite[Proposition 1]{BM25} and \cite[Proposition 2.1]{M24}) we produce a Riemannian metric $g$ on $M$ for which $\Sigma$ is a strictly stable $H$-CMC hypersurface. The follows for sufficiently small non-zero $H$ since the metric changes are localised to annuli around $\mathrm{Sing}(\Sigma_0)$ and can be made arbitrarily small.
\end{proof}

\begin{remark}\label{rem: close gluing metrics}
    It follows from the proof above that $g = g_0$ outside of $B_1^{g_0}(\mathrm{Sing}(\Sigma))$ with both metrics Euclidean in $B_{1/2}^{g_0}(\mathrm{Sing}(\Sigma))$. Moreover, one can ensure that the resulting metric is arbitrarily close to the original metric on $M$ in $C^{k,\alpha}$ norm for each $k \geq 1$ and $\alpha \in (0,1)$.
\end{remark}

If one is willing to forgo prescribing the ambient manifold, we can use a similar construction to the above to prescribe arbitrary unions of closed manifolds (not necessarily orientable) as the singular set of a strictly stable $H$-CMC hypersurface for any $H \in \mathbb{R}$:

\begin{proposition}\label{prop: construction with arbitrary singular set}
    Let $n + 1 \geq 8$, $H \in \mathbb{R}$, and for an integer $L \geq 1$ consider collections of closed Riemannian manifolds $\{(\Gamma_i,h_i)\}_{i = 1}^L$ of dimension $k_i$, $\{(X_i,g_i)\}_{i = 1}^L$ of dimension $n_i + 1$ with $k_i + n_i = n$, and a collection $\{\mathbf{C}_i\}_{i = 1}^L$ of strictly stable, non-planar regular minimal hypercones of dimension $n_i$. Then, there is a closed $(n+1)$-dimensional Riemannian manifold $(M,g)$, formed as the connected sum $M = \#_{i = 1}^L(X_i \times \Gamma_i)$ equipped with some Riemannian metric $g$, and an open set $\Omega \subset M$ such that $\Sigma = \partial \Omega \subset (M,g)$ is a strictly stable $H$-CMC hypersurface with
    $$\mathrm{Sing}(\Sigma) = \bigcup_{i = 1}^L \Lambda_i,$$
    where each $\Lambda_i$ is an embedded copy of $\Gamma_i$. Moreover, for some $\sigma > 0$ sufficiently small, in a neighbourhood of each $\Lambda_i$, $(M,g)$ is locally isometric to $B^{n_i+1}_\sigma(0)\times \Gamma_i$ endowed with the product metric $g_{\mathrm{eucl}}\times h_i$, 
    with $\Sigma$ in this region equal to $(\mathrm{graph}_{\mathbf{C}_i}(u_i) \cap B^{n_i+1}_\sigma(0))\times \Gamma_i$ where the $u_i$ are as in Theorem~\ref{thm: CMC CHS}.
\end{proposition}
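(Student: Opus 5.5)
The plan is to follow the proof of Proposition~\ref{prop: construction in prescribed ambient}, with the point singularities replaced by the submanifolds $\Gamma_i$, in the style of \cite[Theorem 2]{S00} and \cite[Theorem 3]{MN26}.

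\textbf{Model pieces.} For each $i$, apply Theorem~\ref{thm: CMC CHS} in $\mathbb{R}^{n_i+1}$ with the constant $f = H_i$, where $H_i$ is small. This gives $u_i$ with $\mathrm{graph}_{\mathbf{C}_i}(u_i)$ an embedded, strictly stable $H_i$-CMC hypersurface in $B_1^{n_i+1}(0)$ with an isolated singularity at $0$. In $B_\sigma^{n_i+1}(0) \times \Gamma_i$ with the product metric $g_{\mathrm{eucl}} \times h_i$, the product $(\mathrm{graph}_{\mathbf{C}_i}(u_i) \cap B^{n_i+1}_\sigma(0)) \times \Gamma_i$ then has singular set $\{0\} \times \Gamma_i$. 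Its mean curvature is still $H_i$, because the $\Gamma_i$ factor is totally geodesic and contributes no curvature.

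We need every piece to have the common value $H$, including large $H$, so we rescale. Replacing $g_{\mathrm{eucl}}$ by $\lambda^2 g_{\mathrm{eucl}}$ on the first factor multiplies the mean curvature by $\lambda^{-1}$. Choosing $\lambda = H_i/H$ (and $H_i = 0$ when $H = 0$) therefore yields the value $H$. The model stays a product of a Euclidean ball, now of a new radius, with $(\Gamma_i, h_i)$; we rename that radius $\sigma$. Since the ambient manifold is not prescribed, this freedom is harmless.

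\textbf{Global piece and gluing.} In each $X_i \times \Gamma_i$ we take a cobordism-type separating hypersurface as in \cite{S00}. Concretely, $X_i$ contains a small ball modelled on $B^{n_i+1}_\sigma(0)$, and outside it we extend $\mathrm{graph}_{\mathbf{C}_i}(u_i) \cap \partial B_\sigma$ (a perturbed copy of the link) to a smooth separating hypersurface of $X_i \setminus B_\sigma$, times $\Gamma_i$. We then form the connected sum $M = \#_{i=1}^L (X_i \times \Gamma_i)$ away from these product regions. We join the pieces by tubes (necks) to make $\Sigma$ connected and separating; orientability of $\Gamma_i$ is not needed because the hypersurface is a product with $\Gamma_i$, and $\Omega$ is correspondingly a product near $\Lambda_i$.

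We then choose $g$ on the complement of the product neighbourhoods so that $\Sigma$ is $H$-CMC and strictly stable there. This is done with the conformal and local metric modifications of \cite[Proposition 1]{BM25} and \cite[Proposition 2.1]{M24}, as in the previous proof: prescribing the second fundamental form of a smooth compact hypersurface by a metric change supported near it. To secure strict stability we make $\mathrm{Ric}_g(\nu,\nu) + |\sff|^2$ very negative on the smooth part. Alternatively, as in \cite[Proposition 2]{S00}, we choose the metric near $\Sigma$ away from the singular set so that the potential is negative there.

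\textbf{Main obstacle.} The hardest step is global strict stability in the weighted sense \eqref{eqn: strict stability inequality}. Near $\Lambda_i$ the hypersurface is a product, and a test function $u$ on it can be decomposed via eigenfunctions of $\Gamma_i$. The Laplacian of $\Gamma_i$ only adds $|\nabla_{\Gamma_i} u|^2 \geq 0$, and the product distance satisfies $\rho = \mathrm{dist}$ to $\{0\} \times \Gamma_i$. Hence the strict stability of $\mathrm{graph}_{\mathbf{C}_i}(u_i)$ from Theorem~\ref{thm: CMC CHS}, integrated over $\Gamma_i$, gives the weighted inequality on the product region.

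It remains to patch this local estimate with the estimate on the smooth part. We use a partition of unity $\chi^2 + (1-\chi^2) = 1$ together with the standard identity $Q(\chi u) + Q(\sqrt{1-\chi^2}\, u) = Q(u) + \int (|\nabla \chi|^2 + |\nabla \sqrt{1-\chi^2}|^2) u^2$. The resulting error is bounded on a fixed annulus. It is absorbed by making the potential on the smooth part sufficiently negative there, which the metric freedom allows without touching the product regions, so the properties near each $\Lambda_i$ stated in the proposition are preserved.
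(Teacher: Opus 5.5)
Your route differs from the paper's. The paper takes the globally strictly stable minimal hypersurface $\Sigma_0$ of \cite[Theorem 2]{S00}, which already equals $\mathbf{C}_i\times\Gamma_i$ near $\Lambda_i$, replaces the cone factor by the graph of $u_i$, and conformally perturbs; global strict stability is inherited rather than re-proved. Most of your construction is fine, and your rescaling of the first factor is genuinely needed for large $|H|$, since Theorem~\ref{thm: CMC CHS} only handles small $f$ (take $H_i$ with the sign of $H$).

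\textbf{The gap is in the patching step}, which carries all the content of your route. The error $\int(|\nabla\chi|^2+|\nabla\sqrt{1-\chi^2}|^2)u^2$ lives on the transition annulus. Since $\chi$ and $\sqrt{1-\chi^2}$ each vanish at one end of it, absorbing the error needs both $Q(\chi u)$ and $Q(\sqrt{1-\chi^2}\,u)$ to contribute a large multiple of $\int u^2$ there. Neither placement of the annulus gives this.
\begin{itemize}
\item If the annulus lies in the product region $\{\rho<\sigma\}$, as your local estimate for $\chi u$ requires, the potential there is the fixed $|\sff|^2\sim\rho^{-2}$ of the CHS graph, which you may not change. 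The bound $\lambda_0\int\chi^2u^2\rho^{-2}$ degenerates where $\chi\to 0$. Meanwhile $Q(\sqrt{1-\chi^2}\,u)$ contains the uncontrolled term $-\int|\sff|^2(1-\chi^2)u^2$ over the annulus, and a very negative potential outside the product region does not help with it.
\item If the annulus lies outside, your local estimate does not apply to $\chi u$. There $\Sigma$ is an arbitrary smooth extension with a modified metric, and $Q$ over $\{\rho<\sigma\}$ of a function not vanishing on $\{\rho=\sigma\}$ need not be positive.
\end{itemize}
So the claim that the error ``is absorbed by making the potential on the smooth part sufficiently negative there'' does not go through as stated.

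\textbf{A repair.} Keep $\Sigma$ equal to $\mathrm{graph}_{\mathbf{C}_i}(u_i)\times\Gamma_i$ on the whole region $\{\rho<R_i\}$ where your rescaled graph is defined and strictly stable, but keep the product metric only on $\{\rho<\sigma\}$ with $\sigma<R_i$. After making $\Sigma$ $H$-CMC on $\{\rho\geq R_i\}$, multiply the metric by $e^{2\phi}$, where $\phi$ is supported in $\{\rho>\sigma\}$, $\phi=0$ and $d\phi=0$ along $\Sigma$, and $\partial_\nu^2\phi\geq 0$.
\begin{itemize}
\item This leaves the induced metric, $\sff_\Sigma$ and $H$ unchanged, and replaces $\mathrm{Ric}(\nu,\nu)$ by $\mathrm{Ric}(\nu,\nu)-n\,\partial_\nu^2\phi$, so it only increases $Q$.
\item Choosing $\partial_\nu^2\phi$ large makes the potential $\leq-\Lambda$ on $\{\rho\geq\sigma'\}$ for a chosen $\sigma<\sigma'<R_i$.
\end{itemize}
Now put the transition of $\chi$ in $\{\sigma'\leq\rho\leq\sigma''\}$ with $\sigma''<R_i$. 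The CHS inequality, integrated over $\Gamma_i$, applies to $\chi u$ and gives $Q(\chi u)\geq\lambda_0\int\chi^2u^2\rho^{-2}+\Lambda\int_{\{\rho\geq\sigma'\}}\chi^2u^2$. Also $Q(\sqrt{1-\chi^2}\,u)\geq\Lambda\int(1-\chi^2)u^2$. Summing, the term $\Lambda\int_{\{\rho\geq\sigma'\}}u^2$ absorbs the cutoff error once $\Lambda$ is large, which gives the weighted inequality while leaving the product structure on $\{\rho<\sigma\}$ untouched. With this change your direct route is a self-contained alternative to citing Smale's construction.
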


\begin{proof}
    The construction in the proof of \cite[Theorem 2]{S00} provides the ambient manifold $M$ along with a Riemannian metric $g_0$ and a separating strictly stable minimal hypersurface $\Sigma_0 = \partial \Omega_0 \subset (M,g_0)$ satisfying each of the desired conclusions save for the mean curvature condition. In particular, near each $\Lambda_i$ we have that $\Sigma_0$ equal to $\mathbf{C}_i \times \Gamma_i$.

    \bigskip
    
    As in the proof of Proposition~\ref{prop: construction in prescribed ambient} we use Theorem~\ref{thm: CMC CHS} to produce $H$-CMC hypersurfaces as the graphs of functions $u_i \in C^{2,\alpha}(\mathbf{C}_i \cap B_1(p_i) \setminus \{p_i\})$. We then glue these graphs to $\Sigma_0$ in annular regions with a smooth cutoff and conformally perturb the metric (once to get $H$-CMC, then to get stability) to produce the strictly stable $H$-CMC hypersurface $\Sigma$ as desired.
\end{proof}

\begin{remark}
    Since Theorem~\ref{thm: CMC CHS} allows one to prescribe non-constant mean curvature functions, in principle an adaptation of the above construction yields the existence of prescribed mean curvature hypersurfaces with arbitrary closed manifolds as their singular set; we did not pursue this.
\end{remark}

\subsection{Isoperimetry of singular CMC hypersurfaces}

The following result implies Theorem~\ref{thm: CMC singular isop}: 

\begin{theorem} Suppose $M$ is a closed manifold of dimension $n + 1 \geq 8$, $L \geq 1$ is an integer, and $\{\mathbf{C}_i\}_{i = 1}^L$ is a collection of strictly stable, strictly minimising, non-planar regular minimal hypercones $\mathbf{C}_i \subset \mathbb{R}^{n + 1}$. Then, for each $H \neq 0$ sufficiently small there is a Riemannian metric $h$ on $M$ and a separating, strictly stable $H$-CMC hypersurface $\Sigma \subset (M,h)$ with $L$ isolated singularities which bounds an isoperimetric region. Moreover, the unique tangent cone at each isolated singularity $p_i \in \mathrm{Sing}(\Sigma)$ for $1 \leq i \leq L$ is $\mathbf{C}_i$.
\end{theorem}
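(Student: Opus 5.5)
We combine Proposition~\ref{prop: construction in prescribed ambient}, Theorem~\ref{thm: strict stability to VCM}, and an adaptation of the stretching argument of \cite[Theorem 1 \& Corollary 2]{MN26}.

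\textbf{Step 1: a local volume-constrained minimiser.} Fix $H\neq 0$ small. Proposition~\ref{prop: construction in prescribed ambient} gives a metric $g$ on $M$ and an open set $\Omega$ with $\Sigma=\partial\Omega$ a strictly stable $H$-CMC hypersurface. Its $L$ isolated singularities have tangent cones $\mathbf{C}_i$, which are now assumed strictly stable and strictly minimising. We would also arrange that $\Sigma$ is connected, by taking $\Sigma_0$ connected in the construction from \cite{MN26} and \cite{S00}. Theorem~\ref{thm: F_H strict stability to VCM} then shows that $\Omega$ uniquely minimises $\mathcal{F}_H$, and hence is uniquely volume-constrained minimising, in a tubular neighbourhood $U$ of $\Sigma$. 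Since $U$ contains $S_j$ for $j$ large, we may take $U$ bounded by the smooth barriers $\Gamma_j^\pm$. Moreover, every $\mathcal F_H$-competitor $E$ with $E\Delta\Omega\subset U$ has energy strictly larger than $\mathcal F_H(\Omega)$ unless $E=\Omega$.

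\textbf{Step 2: stretching the metric away from $\Sigma$.} Following \cite[Theorem 1]{MN26}, modify $g$ to a metric $h$ that agrees with $g$ on a smaller neighbourhood $U'\subset\subset U$ of $\Sigma$. On the collar $U\setminus U'$, $h$ would be a warped product in the normal direction that greatly increases the area of each level hypersurface. Outside $U$, $h$ would be chosen so that $M\setminus U$ has tiny volume while keeping the isoperimetric profile of the complement large. The aim is that any region of volume $t:=\mathrm{Vol}_h(\Omega)$ whose boundary leaves $U'$, or has a component away from $\Sigma$, has perimeter strictly greater than $\mathrm{Per}_h(\Omega)$. In \cite{MN26} this is done by comparing with the area of $\Sigma$ plus a lower bound on the area forced to cross the stretched collar. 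Here the volume bookkeeping changes because $H\neq0$: the parallel slices in the collar have mean curvature close to $H$, not to $0$. We would therefore use a coarea and calibration-type argument for $\mathcal F_H$ with the parallel foliation, whose mean curvatures can be made to exceed $H$ on the outside and fall below $H$ on the inside after the conformal stretch. This makes the foliated collar an $\mathcal F_H$-mean-convex barrier region, as in \cite[Remark 4]{MN26}.

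\textbf{Step 3: concluding isoperimetry.} Let $\Omega'$ be an isoperimetric region for $(M,h)$ with volume $t$; it exists by \cite[Section 13.2]{M16}. The estimates from Step 2 show $\partial\Omega'\subset U'$, since otherwise $\mathrm{Per}_h(\Omega')>\mathrm{Per}_h(\Omega)$. They also show that $\Omega'\Delta\Omega\subset U'$, because the small complement cannot host the region's volume cheaply. Then $\Omega'$ is a volume-constrained competitor for $\Omega$ in $U'\subset U$, where $h=g$. Theorem~\ref{thm: strict stability to VCM}, in its unique form, gives $\Omega'=\Omega$. Hence $\Omega$ is isoperimetric in $(M,h)$, and the tangent cones are unchanged because $h=g$ near $\Sigma$. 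Strict stability of $\Sigma$ is likewise preserved since $h=g$ near $\Sigma$.

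\textbf{Main obstacle.} The hardest step is Step 2: adapting the stretching so that it forces competitors into $U'$ when $H\neq0$. The volume term breaks the pure area comparison used for minimal $\Sigma$. I would first try requiring $|H|$ small relative to the stretching constants, so that the volume contribution $|H|\cdot\mathrm{Vol}$ is absorbed by the area gain in the collar.
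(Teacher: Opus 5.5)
\textbf{There is a genuine gap.} Your Steps 2--3 carry the entire content of the theorem beyond Proposition~\ref{prop: construction in prescribed ambient} and Theorem~\ref{thm: strict stability to VCM}, and they are asserted rather than proved.

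\emph{The minimal-case stretching does not transfer.} The result you would invoke, \cite[Corollary 1]{MN26}, takes as input that $\Sigma$ is uniquely homologically area-minimising in a tubular neighbourhood. For $H\neq 0$ this fails: by the first variation $H\int_\Sigma\psi\,dA_g$, a compactly supported push of $\mathrm{Reg}(\Sigma)$ in the right direction decreases area and stays homologous to $\Sigma$.

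\emph{Your replacement mechanism cannot do the job.} An $\mathcal F_H$-calibration or barrier from a foliation of the collar only controls competitors $E$ with $E\Delta\Omega$ in the foliated region. It also cannot be globalised, because for $H\neq0$ the set $\Omega$ is never a global $\mathcal F_H$-minimiser: if $H>0$ then $\mathcal F_H(M)=-H\,\mathrm{Vol}(M)<\mathcal F_H(\Omega)$, and if $H<0$ then $\mathcal F_H(\emptyset)=0<\mathcal F_H(\Omega)$. So the volume constraint must be used essentially away from $U$.

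\emph{A barrier does not act on the isoperimetric region.} An isoperimetric $\Omega'$ of $(M,h)$ has its own Lagrange multiplier, with no reason to be near $H$. Nothing you wrote makes $\Omega'$ $L^1$-close to $\Omega$. Hence ``$\partial\Omega'\subset U'$'' and ``$\Omega'\Delta\Omega\subset U'$'' are exactly the unproved claims.

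\emph{Your fallback is circular.} You propose taking $|H|$ small relative to the stretching constants, but in your ordering $H$ is fixed first. The metric $g$, the hypersurface $\Sigma$, and the neighbourhood $U$ from Theorem~\ref{thm: F_H strict stability to VCM} (which has no quantitative size) all depend on $H$. So the stretching constants depend on $H$ too.

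\textbf{The missing idea is to anchor at $H=0$ and pass to a limit.} The paper proceeds in four steps.

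\emph{Stretch at $H=0$.} It takes the strictly stable minimal $\Sigma_0=\partial\Omega_0$ from the proof of Proposition~\ref{prop: construction in prescribed ambient}. This is uniquely homologically area-minimising near itself because the cones are strictly minimising. So \cite[Corollary 1]{MN26} applies verbatim and gives a metric $h_0$ making $\Omega_0$ isoperimetric.

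\emph{Build the CMC hypersurfaces afterwards.} Only then does it construct $H_j$-CMC hypersurfaces $\Sigma_j=\partial\Omega_j$, with $H_j\to0$, inside $(M,h_0)$. The metrics $h_j$ differ from $h_0$ only in annuli near the singular points.

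\emph{Prove uniformity.} It reruns the proof of Theorem~\ref{thm: F_H strict stability to VCM} with the strict stability constants and the constants of Lemma~\ref{lem: dichotomy adapted} and Remark~\ref{rem: Taylor expansion} uniform in $j$. This shows that for large $j$ every $\Omega_j$ is uniquely $\mathcal F_{H_j}$-minimising in one fixed neighbourhood $V$ of $\Sigma_0$. You do not have this statement.

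\emph{Import global information from $H=0$.} The construction of $h_0$ forces any isoperimetric $E_j$ of volume $\mathrm{Vol}_{h_j}(\Omega_j)$ to have a boundary component homologous to $\Sigma_0$. Three facts then combine:
\begin{itemize}
\item $\mathrm{Per}_{h_j}(\Omega_j)\to\mathrm{Per}_{h_0}(\Omega_0)$;
\item $\Sigma_0$ is uniquely homologically minimising in $V$;
\item boundary components have perimeter uniformly bounded below.
\end{itemize}
Together these give, for large $j$, that $\partial E_j$ is a single component inside $V$ homologous to $\Sigma_j$. Unique volume-constrained minimisation in $V$ then yields $E_j=\Omega_j$. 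This is where the smallness of $H$ is genuinely used.
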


\begin{proof}
The proof of Proposition~\ref{prop: construction in prescribed ambient} produces a separating, strictly stable minimal hypersurface $\Sigma_0 = \partial \Omega_0$ for some Riemannian metric $g_0$. Since the $\{\mathbf{C}_i\}_{i = 1}^L$ are assumed to be strictly minimising we see that, as noted in the conclusion of the proof of \cite[Theorem 3]{MN26} (c.f.~\cite[Pages 2323-2324]{S00}), $\Sigma_0$ is uniquely homologically area-minimising in a tubular neighbourhood. Applying \cite[Corollary 1]{MN26} we find a metric $h_0$ on $M$ for which $\Sigma_0$ bounds an isoperimetric region in $(M,h_0)$ with $h_0 = g_0$ in a tubular neighbourhood $V_0 \subset M$ of $\Sigma_0$ (in which $\Sigma_0$ remains locally uniquely homologically area-minimising). 

\bigskip

We now consider a sequence $H_j \rightarrow 0$ (with $H_j \neq 0$) and argue that for $j \geq 1$ sufficiently large we can find $H_j$-CMC hypersurfaces $\Sigma_j = \partial \Omega_j$ with $\Omega_j$ an isoperimetric region for its volume with respect to some Riemannian metric $h_j$. Along this sequence, Proposition~\ref{prop: construction in prescribed ambient} (now applied in $(M,h_0)$) produces separating, strictly stable $H_j$-CMC hypersurfaces $\Sigma_j = \partial \Omega_j$ for some Riemannian metric $h_j$, where $h_j = h_0$ outside of annular regions around $\mathrm{Sing}(\Sigma_0)$ that are contained in $V_0$. Moreover, from its construction (up to choosing larger $j \geq 1$) we can arrange that $\Sigma_j \subset V_0$, each $\Sigma_j$ is homologous to $\Sigma_0$ in $V_0$, with $\Sigma_j \rightarrow \Sigma_0$ in Hausdorff distance, $h_j \rightarrow h$ in $C^3$, and $\mathrm{Per}_{h_j}(\Omega_j) \rightarrow \mathrm{Per}_{h_0}(\Omega_0)$ as $j \rightarrow \infty$.

\bigskip

For a sequence $t_k \rightarrow 0$ we consider the barriers $S_k^j$ (for fixed $j \geq 1$) constructed over each $\Sigma_j$ at the start of Subsection~\ref{subsec: strict stability to VCM}. Applying Theorem~\ref{thm: F_H strict stability to VCM}, we deduce that each $\Omega_j$ is uniquely $\mathcal{F}_{H_j}$-minimising with respect to the metric $h_j$ in $S_k^j$ for all $k \geq 1$ sufficiently large; let us denote $k_j \geq 1$ for the smallest such $k \geq 1$ for which this holds (noting that $S_{k+1}^j \subset S_k^j$). We now show that $\tilde{k} = \sup_{j \geq 1} k_j$ is finite, demonstrating that the tubular neighbourhoods of the $\Sigma_j$ in which the $\Omega_j$ are uniquely volume-constrained minimising are uniformly bounded below in size. If this supremum was not finite we would find a sequence $k_j \rightarrow \infty$ and $\mathcal{F}_{H_j}$-minimisers $E_j \neq \Omega_j$ in $S_{k_j-1}^j$ for each $j \geq 1$. Since $t_{k_j} \rightarrow 0$ we have $\partial S_{k_j -1}^j \rightarrow \Sigma_0$ and hence $E_j \rightarrow \Omega_0$ as $j \rightarrow \infty$. Arguing as in the proof of Theorem~\ref{thm: F_H strict stability to VCM}, now over each hypersurface $\Sigma_j$, we deduce that $E_j = \Omega_j$ for sufficiently large $j \geq 1$ which is a contradiction. Here we are using the fact that the strict stability constants of the $\Sigma_j$ are uniformly bounded below, that the constants in Lemma~\ref{lem: dichotomy adapted} and Remark~\ref{rem: Taylor expansion} do not depend on $j \geq 1$, and that each $\Sigma_j$ has isolated singularities modelled on the same tangent cones $\{\mathbf{C}_i\}_{i = 1}^L$ as $\Sigma_0$.

\bigskip

From the construction of the $\Sigma_j$ over $\Sigma_0$ we ensure that for fixed $k \geq 1$ the barriers $S_k^j$ have a width bounded below independently of $j \geq 1$ and that $\Sigma_j \rightarrow \Sigma_0$ in the Hausdorff sense as $j \rightarrow \infty$. We thus see that $S_{\tilde{k}}^j$ contains a tubular neighbourhood $V$ (with smooth boundary) of $\Sigma_0$ for all $j \geq 1$ sufficiently large with $\Sigma_j \subset V \subset S_{\tilde{k}}^j$ for all $j \geq 1$ sufficiently large. Moreover, from the above we ensure that $\Omega_j$ is uniquely $\mathcal{F}_{H_j}$-minimising in $V$ whenever $j \geq 1$ is sufficiently large.

\bigskip

If $h_j = h_0$ we could directly apply \cite[Corollary 2]{MN26} in $V$ (noting that by \cite[Remark 2]{MN26} we allow for non-empty singular set) to conclude that $\Omega_j$ is an isoperimetric region with respect to  the metric $h_j$ whenever $j \geq 1$ was sufficiently large. However, since we do not necessarily have $h_j = h_0$, we must adapt its proof; we now outline this and refer the reader to the proof of \cite[Corollary 2]{MN26}.

\bigskip

First, we note that the construction of $h_0$ (which agrees with $h_j$ outside of $V$) from the proof of \cite[Theorem 1]{MN26} ensures that any isoperimetric region $E_j$ of enclosed volume $\mathrm{Vol}_{h_j}(\Omega_j)$ with respect to $h_j$ must have some boundary component homologous to $\Sigma_0$ (and hence $\Sigma_j$) in $V_0$. Since $\mathrm{Per}_{h_j}(\Omega_j) \rightarrow \mathrm{Per}_{h_0}(\Omega_0)$ as $j \rightarrow \infty$ and $\Sigma_0$ is uniquely homologically area-minimising in $V$ with respect to $h_0$, potentially taking $j \geq 1$ larger we ensure that $E_j$ has only one boundary component which moreover must lie in $V$ since $\partial E_j \rightarrow \Sigma_0$ as $j \rightarrow \infty$). Indeed, this holds whenever 
$|\mathrm{Per}_{h_j}(\Omega_j) - \mathrm{Per}_{h_0}(\Omega_0)| < \frac{\delta}{2}$,
where $\delta > 0$ is a uniform lower bound for the perimeter of each boundary component of this isoperimetric region. Such a $\delta$ arises from the uniformly bounded geometry of the $h_j$ (noting again that $h_j \rightarrow h_0$ as $j \rightarrow \infty$), which gives a uniform mean curvature bound for $\partial E_j$, from which an application of the monotonicity formula as in the proof of \cite[Theorem 1]{MN26}. 

\bigskip

Thus we see that for $j \geq 1$ sufficiently large, any isoperimetric region $E_j$ of enclosed volume $\mathrm{Vol}_{h_j}(\Omega_j)$ with respect to $h_j$ has only one boundary component, with this component lying in $V$, and homologous to $\Sigma_j$. The unique volume-constrained minimisation of $\Omega_j$ in $V$ then ensures that $E_j = \Omega_j$, so that $\Omega_j$ is an isoperimetric region for its volume with a singular non-zero CMC hypersurface $\Sigma_j$ as its boundary.
\end{proof}

\section*{Acknowledgements} KMS thanks Fritz Hiesmayr for influential discussions regarding \cite{CHS84} and \cite{I17}. GP would like to thank his advisor, Luca Spolaor, for his guidance on isoperimetric problems with singularities.

\section*{Declaration} The ideas presented in this work were developed by the authors.

\bibliographystyle{alpha}
\bibliography{main}

\hrule
\Addresses
\end{document}